\newcommand{\ifmma}[1]{{}}
\newcommand{\ifmtwoan}[1]{{#1}}
\pgfplotsset{compat=1.5.1}
\newtheorem{theorem}{Theorem}[section]
\newtheorem{lemma}[theorem]{Lemma}
\newtheorem{definition}[theorem]{Definition}
\newtheorem{remark}[theorem]{Remark}
\newcommand{\eps}{\varepsilon} 
\newcommand{\IR}{{\mathbb R}}
\newcommand{\IC}{{\mathbb C}}
\newcommand{\IN}{{\mathbb N}}
\newcommand{\vf}{{\mathbf f}}
\newcommand{\vgg}{{\mathbf g}}
\newcommand{\vn}{{\mathbf n}}
\newcommand{\vx}{{\mathbf x}}
\newcommand{\vv}{{\mathbf v}}
\newcommand{\vw}{{\mathbf w}}
\DeclareMathOperator{\imagpart}{Im}
\renewcommand{\Im}{{\imagpart}}
\newcommand{\imag}{\ensuremath{{\rm i}}}
\newcommand{\wh}{\widehat}
\newcommand{\wt}{\widetilde}
\DeclareMathOperator{\Div}{div}
\newcommand{\plcurl}{\operatorname{{\bf curl}}_{2D}}
\newcommand{\scurl}{\operatorname{curl}_{2D}}
\newcommand{\supp}{\operatorname{supp}}
\newcommand{\zerobf}{\boldsymbol{0}}
\newcommand{\dsurf}{\mathrm{d}\sigma(\vx)}
\newcommand{\dx}{\mathrm{d}\vx}
\newcommand{\dS}{\mathrm{d}S}
\def\half{\frac{1}{2}}
\newcommand{\eg}{\textit{e.\,g\mbox{.}}\xspace}
\newcommand{\ie}{\textit{i.\,e\mbox{.}}\xspace}
\newcommand{\cf}{\textit{cf.} }
\newcommand{\appr}{{\mathrm{appr}}}
\newcommand{\apprA}[2]{#1_{\appr,#2}}
\tikzset{dash dotted/.style={dash pattern=on 1pt off 4pt on 6pt off 4pt}} %
\numberwithin{equation}{section}
\definecolor{green07}{rgb}{0, 0.7, 0}
\definecolor{green08}{rgb}{0, 0.8, 0}
\begin{document}
    
    \title{Impedance boundary conditions for acoustic time harmonic wave propagation
        in viscous gases}
    \author{Kersten Schmidt and Anastasia Th\"ons-Zueva}%

    %
    %
    \date{Version: \today}

    \maketitle
    
        \begin{abstract}
        We present Helmholtz or Helmholtz like equations for the
        approximation of the time-harmonic wave propagation in gases with
        small viscosity, which are completed with local boundary
        conditions on rigid walls. %
        We derived approximative models based on the method of multiple
        scales for the pressure and the velocity separately, both up to
        order 2.
        Approximations to the pressure are described by the Helmholtz
        equations with impedance boundary conditions, which relate its
        normal derivative to the pressure itself. %
        The boundary conditions from first order on are of Wentzell type
        and include a second tangential derivative of the pressure
        proportional to the square root of the viscosity, and take thereby
        absorption inside the viscosity boundary layer of the underlying
        velocity into account.
        
        The velocity approximations are described by Helmholtz like
        equations for the velocity, where the Laplace operator is replaced
        by $\nabla \Div$, and the local boundary conditions relate the
        normal velocity component to its divergence. The velocity
        approximations are for the so-called far field and do not exhibit
        a boundary layer. %
        Including a boundary corrector, the so called near field, the
        velocity approximation is accurate even up to the domain boundary.
        
        The boundary conditions are stable and asymptotically exact, which
        is justified by a complete mathematical analysis. %
        The results of some numerical experiments are presented to
        illustrate the theoretical foundation.
    \end{abstract}
    
    \numberwithin{equation}{section}
    
    \ifthenelse{\boolean{tableOfContents}}{%
        \tableofcontents
    }{}%
    
    \section{Introduction}
    
    In this study we are investigating the acoustic equations as a
    perturbation of the Navier-Stokes equations around a stagnant uniform
    fluid, with mean density $\rho_0$ and without heat flux.  For gases
    the (dynamic) viscosity $\eta$ is very small and leads to
    \textit{viscosity boundary layers} close to walls.
    To resolve the boundary layers with (quasi-)uniform meshes, the mesh
    size has to be of the same order, which leads to very large linear
    systems to be solved. %
    This is especially the case for the very small boundary layers of
    acoustic waves. %
    In its turn, the pressure field does not possess a boundary layer,
    however, this fact cannot be used without some preliminary adjustments
    as there are no existing physical boundary conditions for pressure.
    
    In an earlier work~\cite{Schmidt.Thoens.Joly:2014} we derived a
    complete asymptotic expansion for the problem based on the technique
    of multiscale expansion in powers of $\sqrt{\eta}$ which takes into
    account curvature effects.  This asymptotic expansion was rigorously
    justified with optimal error estimates.
    In this article we propose and justify, based on the asymptotic
    expansion in~\cite{Schmidt.Thoens.Joly:2014}, (effective)
    \textit{impedance boundary conditions} for the velocity as well as
    the pressure for possibly curved boundaries. %
    Similar techniques to derive approximative models have been used for
    thin
    sheets~\cite{Schmidt.Tordeux:2011,Delourme.Haddar.Joly:2012,Perrussel.Poignard:2013}
    or for conducting bodies~\cite{Haddar.Joly.Nguyen:2005}.
    The advantage of using this approach lies in the fact that the
    solution can be divided into the far field with specified boundary
    condition, \ie, impedance boundary condition, and a correcting near
    field, which helps to avoid resolving the boundary layer.
    A similar strategy is used for deriving a wall boundary conditions
    for acoustic plane waves in presence of a shear flow~\cite{Auregan.Starobinski.Pagneux:2001}. 
    
    The article is subdivided as follows. In Sec.~\ref{sec:ModelDef} we
    define the model problem of the viscous acoustic equations for
    velocity and pressure and state the impedance boundary conditions for
    the velocity and for the pressure as well as the stability and
    modelling error estimates. %
    Sec.~\ref{sec:ImpedanceBC} is dedicated to the derivation of the
    impedance boundary condition on the basis of the asymptotic expansion
    presented in~\cite{Schmidt.Thoens.Joly:2014}. %
    The well-posedness as well as estimates of the modelling error of the
    approximative models with the impedance boundary conditions will be
    shown in Sec.~\ref{sec:justification}.  Results of some numerical
    experiments in Sec.~\ref{sec:NumResults} shall emphasize the validity
    of the theoretical findings.
    
    \section {Model problem definition and main results}
    \label{sec:ModelDef}
    
    \subsection{Geometry and model problem}
    
    Let $\Omega \subset \IR^2$ be a bounded Lipschitz domain with boundary
    $\partial\Omega$ , where $\vn$ denotes the outer normalised normal vector. %
    If $\partial\Omega$ is piecwise $C^2$ then $\kappa$ denotes the (signed) curvature \emph{a.e.} on $\partial\Omega$
    which is positive on convex parts of $\partial\Omega$.
    
    \begin{figure}[t]
        \centering
        \subfigure[]
        {
            \label{fig:coordinate}
            \begin{tikzpicture}[scale=2.5,allow upside down,
            interface/.style={
                postaction={draw,decorate,decoration={border,angle=-45,
                        amplitude=0.3cm,segment length=2mm}}},
            ]
            \draw [line width=1pt,interface] plot [smooth cycle] coordinates 
            {(0,0) (0.5,-0.7) (1.3,-0.6) (2.2,-0.7) (2.8,0) (2.4,1.2) (0.8,1)}
            node at (2.2,-0.2) {$\Omega$}
            node at (2.1,1.16) {$\partial \Omega$}
            node at (1.3,-0.6) [sloped,inner sep=0cm,above,anchor=south west,
            minimum height=1cm,minimum width=1cm](N){}
            node at (0,0) [sloped,inner sep=0cm,above,anchor=north east,
            minimum height=1cm,minimum width=1cm](N2){};
            \path (N.south west)%
            edge[-stealth',blue,thick] node[left] {$ s$} (N.north west)
            edge[-stealth',blue,thick] node[above] {$ t$} (N.south east);
            \path (N2.north west)%
            edge[stealth-,blue,line width=1pt] node[above] {$\vn(t)$} (N2.north east);
            \foreach \ll in {1, ..., 6} { 
                \draw (1.5,0.3) circle (\ll/20);
            };
            \draw (1.2,0.5) node {$\vf$};
            \draw [color = white] (3,0) rectangle (3.5,1);
            \end{tikzpicture} 
        }
        \subfigure[]{
            \label{fig:torus}
            \begin{tikzpicture}[scale=3,allow upside down,
            interface/.style={
                postaction={draw,decorate,decoration={border,angle=45,
                        amplitude=0.2cm,segment length=1.5mm}}},
            ]
            \draw [line width=1pt,interface] (1,0) -- (0,0);
            \draw [line width=1pt,interface] (0,2) -- (1,2);
            \draw [line width=1pt] (1,0) -- (1,2);
            \draw [line width=1pt] (0,2) -- (0,0);
            \draw [line width=1pt,interface] (0.25,1.5) circle (0.15);
            \foreach \ll in {1, ..., 8} { 
                \draw (0.75,0.5) circle (\ll/35);
            };
            \draw (0.3,0.07) node {$\partial \Omega$};
            \draw (0.7,1.93) node {$\partial \Omega$};
            \draw (0.35,1.7) node {$\partial \Omega$};
            \draw (0.5,1) node {$\Omega$};
            \draw (0.47,0.5) node {$\vf$};
            \end{tikzpicture}
        }
        \caption{(a) Definition of a general domain with a local coordinate system $(t,s)$ 
            close to the wall; (b) Definition of a torus domain for numerical simulations.}
    \end{figure}
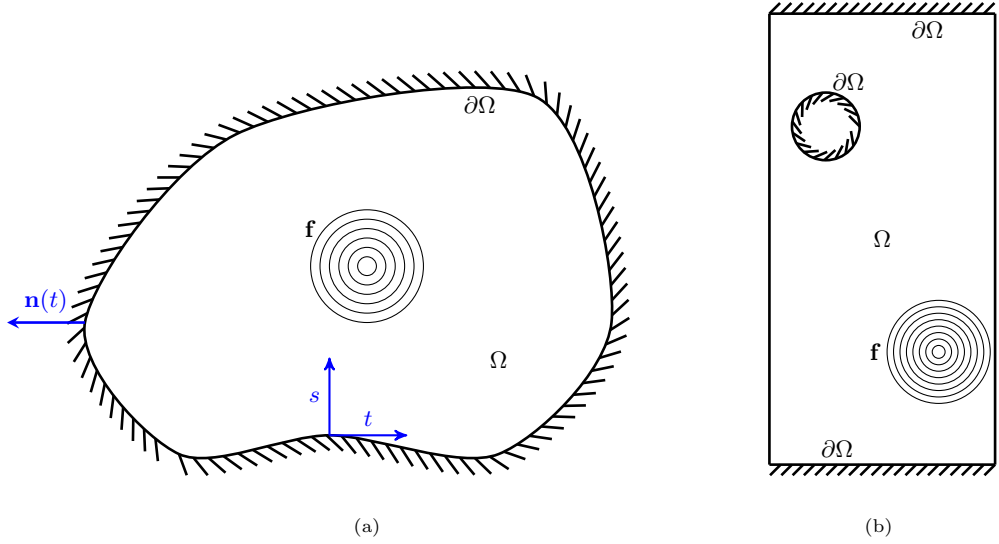
    
    We consider the time-harmonic acoustic velocity $\vv$ and acoustic
    pressure $p$ (the time regime is $\mathrm{e}^{-\imag\omega t}$,
    $\omega \in \IR^+$) which are described by the coupled system
    \begin{subequations}
        \label{eq:navier.stokes}
        \begin{align}
        -\imag\omega\rho_0  \vv  +\nabla p - \eta\Delta \vv -\eta'\,\nabla\Div \vv %
        &= \vf,%
        \hspace{4em}\text{in }\Omega,
        \label{eq:navier.stokes:mom} \\ 
        -\imag\omega p + \rho_0 c^2\,\Div \vv &= 0,%
        \hspace{4em}\text{in }\Omega, \label{eq:navier.stokes:pr}\\
        \label{eq:navier.stokes:bound}
        \vv &= \zerobf, \hspace{4em} \text{on } \partial\Omega.
        \end{align}
    \end{subequations}
    
    In the \textit{momentum equation}~\eqref{eq:navier.stokes:mom} with
    some known source term $\vf$ the viscous dissipation in the momentum
    is not neglected as we consider near wall regions. %
    Here, $\rho_0$ is the density of the media, $c$ is the speed of sound,
    $\eta > 0$ is the dynamic viscosity and %
    $\eta'$ the second (volume) viscosity. %
    {Both shall take small values and we call $\gamma' = \eta'/\eta$
        their quotient.}
    The system is completed by \textit{no-slip} boundary conditions. %
    Similar acoustic equations have been derived and studied 
    in~\cite{Howe:1998,Rienstra:2010,Landau:1959} for a stagnant flow
    and in~\cite{Auregan.Starobinski.Pagneux:2001, Munz:2007, Howe:1998, Howe:1979, 
        Riensra.Darau:2011} for the case that a mean flow is present.
    
    It is well-known that the acoustic velocity field exhibits a
    boundary layer of thickness $O(\sqrt{\eta})$ starting at the rigid
    wall, see \eg~\cite{Auregan.Starobinski.Pagneux:2001, Andreev.Kopteva:2008, 
        Iftimie.Sueur:2010, Schmidt.Thoens.Joly:2014} and the references there. In the following we
    propose definitions of far field velocities, which approximate the
    acoustic velocity outside the boundary layer, correcting near field
    velocities and approximative pressure distributions.
    
    \subsection{Defintions of the approximative models with impedance boundary conditions}
    
    In this section we present approximative models for the far field
    pressure $\apprA{p}{N}$ of order $0$, $1$ and $2$ (Sec.~\ref{sec:ModelDef:pressure})
    and for the far field velocity $\apprA{\vv}{N}$  of order $0$, $1$ and $2$ (Sec.~\ref{sec:ModelDef:velocity}), which include in particular
    impedance boundary conditions. %
    For both kind of approximative models the approximations to the respective other quantity, acoustic velocity $\apprA{\vv}{N}$ or pressure $\apprA{p}{N}$, results in a post-processing step
    by algebraic equations from $\apprA{p}{N}$ or $\apprA{\vv}{N}$, respectively. 
    Moreover, velocity boundary layer correctors $\apprA{\vv^{BL}}{N}$ 
    can be computed from the far field velocity. %
    They are derived for smooth boundaries, but their (weak) formulations can be defined if the domain $\Omega$ is Lipschitz, and piecewise $C^2$ boundary 
    is required for the models of order $2$ that include the curvature.

    \subsubsection{Approximative models for the pressure}
    \label{sec:ModelDef:pressure}
    
    \noindent
    The approximative model of order~0 is given by
    \begin{subequations}
        \label{eq:pappr:0}
        \begin{align}
        \label{eq:pappr:0:PDE}
        \Delta \apprA{p}{0} + \frac{\omega^2}{c^2} \apprA{p}{0} &= \Div \vf\ ,  &&\text{in }\Omega\ ,\\
        \label{eq:pappr:0:bc}
        \nabla \apprA{p}{0} \cdot \vn  &= \vf \cdot \vn\ ,&&\text{on }\partial\Omega\ .
        \end{align}
    \end{subequations}
    If the source $\vf$ is localized away from the boundary $\partial\Omega$ then the boundary conditions are homogeneous, likewise the following impedance conditions of higher order.
    We define a model of order~1 by
    \vspace{-0.3em}
    \begin{subequations}
        \label{eq:pappr:1}
        \begin{align}
        \label{eq:pappr:1:PDE}
        \Delta \apprA{p}{1} + \frac{\omega^2}{c^2} \apprA{p}{1} &= \Div \vf\ ,  &&\text{in }\Omega\ ,  \\
        \label{eq:pappr:1:bc}
        \nabla \apprA{p}{1} \cdot \vn + (1+\imag)\sqrt{\frac{\eta}{2\omega\rho_0}}\partial_\Gamma^2 
        \apprA{p}{1} 
        &= \vf \cdot \vn-(1+\imag)\sqrt{\frac{\eta}{2\omega\rho_0}} \partial_\Gamma(\vf\cdot\vn^\bot)\ ,&&\text{on }\partial\Omega\
        \end{align}
    \end{subequations}
    with the tangential derivative $\partial_\Gamma$ (see below)
    and for order~2 we define
    \begin{subequations}
        \label{eq:pappr:2}
        \begin{align}
        \label{eq:pappr:2:PDE}
        \Big(1- \frac{\imag\omega(\eta+\eta')}{\rho_0 c^2}\Big)\Delta \apprA{p}{2} +
        \frac{\omega^2}{c^2}\apprA{p}{2} &= \Div \vf\, \qquad \text{in }\Omega\ , \\
        \label{eq:pappr:2:bc}
        \Big(1- \frac{\imag\omega(\eta+\eta')}{\rho_0 c^2}\Big)\nabla \apprA{p}{2} \cdot \vn + (1+\imag)\sqrt{\frac{\eta}{2\omega\rho_0}}
        \partial_\Gamma^2 \apprA{p}{2} 
        +\frac{\imag\eta}{2\omega\rho_0}\partial_\Gamma(\kappa\partial_\Gamma \apprA{p}{2}) &= \\
        \vf \cdot \vn -(1+\imag)\sqrt{\frac{\eta}{2\omega\rho_0}} \partial_\Gamma(\vf\cdot\vn^\bot)
        - \frac{\imag\eta}{2\omega\rho_0} \partial_\Gamma(\kappa \vf\cdot\vn^\bot)
        &- \frac{\imag\eta}{\omega\rho_0}\plcurl\scurl\vf\cdot\vn\ ,\quad\text{on }\partial\Omega\ .\nonumber
        \end{align}
    \end{subequations}
    
    The weak formulation for~\eqref{eq:pappr:0} reads: Seek $\apprA{p}{N} \in H^1(\Omega)$
    such that for all $q' \in H^1(\Omega)$
    \begin{align}
    \label{eq:pappr:0:var}
    \int_\Omega \nabla \apprA{p}{N}\cdot \nabla q' - \frac{\omega^2}{c^2} \apprA{p}{N}q\, \dx
    &= \int_\Omega \vf\cdot \nabla q' \dx\ .
    \end{align}
    The impedance boundary conditions~\eqref{eq:pappr:1:bc}
    and~\eqref{eq:pappr:2:bc} are of Wentzell type, %
    see \cite{BonnaillieNoel.Dambrine.Herau.Vial:2010,Schmidt.Heier:2013}
    for the functional framework. %
    With the Sobolev space $H^1(\Omega) \cap H^1(\partial\Omega)$ with functions that are in $H^1(\Omega)$ and whose traces are in $H^1(\partial\Omega)$
    the weak formulations for the systems~\eqref{eq:pappr:1} and~\eqref{eq:pappr:2} 
    are given as: Seek $\apprA{p}{N} := H^1(\Omega) \cap H^1(\partial\Omega)$
    such that for all $q' \in H^1(\Omega) \cap H^1(\partial\Omega)$
    \begin{multline}
    \int_\Omega \Big(1- \frac{\imag\omega(\eta+\eta')\delta_{N=2}}{\rho_0 c^2}\Big) \nabla\apprA{p}{N}\cdot \nabla q' - \frac{\omega^2}{c^2} \apprA{p}{N}q'\, \dx
    - \int_{\partial\Omega} \left((1 + \imag)\sqrt{\frac{\eta}{2\omega\rho_0}} + \frac{\imag\eta\delta_{N=2}}{2\omega\rho_0}\kappa\right) \partial_\Gamma \apprA{p}{N} \partial_\Gamma q'\,\dsurf\\
    = \int_\Omega \vf\cdot \nabla q' \dx %
    - \int_{\partial\Omega} \left((1 + \imag)\sqrt{\frac{\eta}{2\omega\rho_0}} + \frac{\imag\eta\delta_{N=2}}{2\omega\rho_0}\kappa\right) \vf\cdot\vn^\bot \partial_\Gamma q' 
    + \frac{\imag\eta\delta_{N=2}}{\omega\rho_0}\plcurl\scurl\vf\cdot\vn q'\,\dsurf\ .
    \end{multline}
    When the far field pressure is computed we may obtain {\em
        a-posteriori} the far field velocity of order 0, 1 and 2 by %
    \begin{subequations}
        \label{eq:pN1:vN}
        \begin{align}  
        \label{eq:pj:v:01}
        \apprA{\vv}{N} &= \frac{\imag}{\rho_0\omega}(\vf - \nabla \apprA{p}{N}),
        \text{ for } N=0, 1,
        && \text{ in }\Omega, \\
        \label{eq:pj:v:2}
        \apprA{\vv}{2} &= \frac{\imag}{\rho_0\omega} \vf - \frac{\imag}{\rho_0\omega} \Big(1- \frac{\imag\omega(\eta+\eta')}{\rho_0 c^2}\Big) \nabla \apprA{p}{2} 
        + \frac{\eta}{\rho_0^2\omega^2}\plcurl\scurl\vf,
        && \text{ in }\Omega,
        \end{align}
    \end{subequations}
    Close to the wall the far field velocities have to be corrected by a
    function 
    \begin{align}
    \label{eq:vappr:BL}
    \apprA{\vw^{BL}}{N} = \sqrt{\frac{2\,\eta}{\omega\rho_0}}\plcurl(\apprA{\phi}{N}\chi),  
    \end{align}
    where $\chi$ is an admissible cut-off function (see Definition~\ref{lem:chi}). %
    
    For the definition of the approximative boundary layer functions we need to introduce a local coordinate system $(t,s)$
    where points close to $\partial\Omega$ can be uniquely written as
    \begin{align}
    \label{eq:localCoord}
    \vx(t,s) = \vx_{\partial\Omega}(t) - s \vn(t)
    \end{align}
    where the boundary is described by the mapping $\vx_{\partial\Omega}(t)$ from an interval $T \in \IR$ and $s$ is the distance from the boundary (see Fig.~\ref{fig:coordinate}).
    Without loss of generality we can assume $|\vx_{\partial\Omega}'(t)| = 1$ for all $t \in T$ and the tangential derivative is given by $\partial_\Gamma v(\vx) = \partial_t v(\vx(t,s))$. %
    Then, 
    
    $\apprA{\phi}{N}(\vx) = \apprA{\wt{\phi}}{N}(t,s\sqrt{\frac{\omega\rho_0}{2\eta}})$ %
    in the so-called $\chi$-neighbourhood of the boundary and
    \begin{align}
    \label{eq:vappr:phi}
    \apprA{\wt{\phi}}{N}(t,S) &:= %
    \half(1+\imag)\,\mathrm{e}^{-(1-\imag) S}\sum_{\ell=0}^N \left(\frac{2\,\eta}{\omega\rho_0}\right)^{\frac{\ell}{2}} E_\ell(\apprA{\vv}{N}\cdot\vn^\bot)(t,S).
    \end{align}
    Here, the operators $E_\ell$, which were recursively defined
    in~\cite[Lemma~A.1]{Schmidt.Thoens.Joly:2014} {(the parameter $\eta_0
        = \omega\rho_0/2$ has to be used in their definition), } will be
    given for $\ell=0,1,2$ in~\eqref{eq:El}. %
    Note, that the $\Div$-free near field correctors~\eqref{eq:vappr:BL}
    can be replaced by $\sqrt{2\eta/(\omega\rho_0)}\chi\plcurl(\apprA{\phi}{N})$ without changing the asymptotic behaviour.
    
    \subsubsection{Approximative models for the acoustic velocity}
    \label{sec:ModelDef:velocity}
    
    \noindent
    The approximative model of order~0 is given by
    \vspace{-0.3em}
    \begin{subequations}
        \label{eq:vappr:0}
        \begin{align}
        \label{eq:vappr:0:PDE}
        \nabla\Div \apprA{\vv}{0} + \frac{\omega^2}{c^2} \apprA{\vv}{0} &= 
        \frac{\imag\omega}{\rho_0 c^2} \vf, && \text{ in }\Omega,\\[0.2em]
        \label{eq:vappr:0:bc}
        \apprA{\vv}{0}\cdot\vn &= 0,&& \text{ on }\partial\Omega,
        \end{align}
    \end{subequations}%
    that of order~1 by
    \vspace{-0.5em}
    \begin{subequations}
        \label{eq:vappr:1}
        \begin{align}
        \label{eq:vappr:1:PDE}
        \nabla\Div \apprA{\vv}{1} + \frac{\omega^2}{c^2} \apprA{\vv}{1} &= 
        \frac{\imag\omega}{\rho_0 c^2} \vf, && \text{ in }\Omega,\\[0.2em]
        \label{eq:vappr:1:bc}
        \apprA{\vv}{1}\cdot\vn - 
        (1+\imag)\frac{c^2}{\omega^2}
        \sqrt{\frac{\eta}{2\omega\rho_0}}
        \partial_\Gamma^2\Div\apprA{\vv}{1}%
        &= \frac{(\imag-1)}{\omega \rho_0} \sqrt{\frac{\eta}{2\omega\rho_0}}
        \partial_\Gamma(\vf\cdot\vn^\bot),&& \text{ on }\partial\Omega,
        \end{align}
    \end{subequations}
    and 
    \begin{subequations}
        \label{eq:vappr:2}
        \begin{align}
        \label{eq:vappr:2:PDE}
        \left(1-\frac{\imag\omega(\eta+\eta')}{\rho_0c^2}\right)\nabla \Div \apprA{\vv}{2} + 
        \frac{\omega^2}{c^2} \apprA{\vv}{2} 
        &= \frac{\imag\omega}{\rho_0 c^2} \vf
        +\frac{\eta}{\rho_0^2c^2}\plcurl\scurl \vf, && \text{ in }\Omega,\\[0.2em]
        \label{eq:vappr:2:bc}
        \apprA{\vv}{2}\cdot\vn - 
        \frac{c^2}{\omega^2}f
        \Big(
        (1+\imag)\sqrt{\frac{\eta}{2\omega\rho_0}}
        \partial_\Gamma^2\Div\apprA{\vv}{2}%
        &+ \frac{\imag\eta}{2\omega\rho_0}
        \partial_\Gamma(\kappa\partial_\Gamma\Div \apprA{\vv}{2})\Big)\nonumber \\
        &= \frac{(\imag-1)}{\omega \rho_0} \sqrt{\frac{\eta}{2\omega\rho_0}}
        \partial_\Gamma(\vf\cdot\vn^\bot)
        - \frac{\eta}{2\omega^2\rho_0^2}\partial_\Gamma(\kappa\,\vf\cdot\vn^\bot),&& \text{ on }\partial\Omega, 
        %
        \end{align}
    \end{subequations}
    defines the approximative model of order~2. %
    
    The impedance boundary conditions~\eqref{eq:vappr:1:bc}
    and~\eqref{eq:vappr:2:bc} have similarities with Wentzell's boundary
    conditions~\cite{Feller:1952, Venttsel:1956, Feller:1957,
        Venttsel:1959}, where, however, the second tangential derivative applies
    to the Neumann trace $\Div\apprA{\vv}{N}$, and not to the Dirichlet
    trace, which is here $\apprA{\vv}{N}\cdot\vn$. %
    The limit velocity model and the approximative models of higher order are of different kind as the exact model~\eqref{eq:navier.stokes}
    since the $\Delta\apprA{\vv}{N}$ and so $\plcurl\scurl\apprA{\vv}{N}$ are missing and there is no condition on the tangential component.
    
    The weak formulation for~\eqref{eq:vappr:0} reads: Seek $\apprA{\vv}{0} \in H_0(\Div,\Omega)$ such that
    \begin{align}
    \label{eq:vappr:var:0}
    \int_{\Omega} \Div \apprA{\vv}{0}\Div \vv' - \frac{\omega^2}{c^2}\apprA{\vv}{0}\cdot\vv'\dx &= \int_\Omega \vf\cdot\vv'\dx\quad\forall \vv'\in H_0(\Div,\Omega).
    \end{align}
    Introducing the Lagrange multipliers $\apprA{\lambda}{N} = \left(1-\frac{\imag\omega(\eta+\eta')\delta_{N=2}}{\rho_0c^2}\right)\Div\apprA{\vv}{N}$, $N = 1,2$ 
    on $\partial\Omega$ we find the mixed variational formulations for the systems~\eqref{eq:vappr:1} and~\eqref{eq:vappr:2}:  Seek 
    $(\apprA{\vv}{N},\apprA{\lambda}{N}) \in H(\Div,\Omega)\times H^1(\partial\Omega)$ such that
    for all $(\vv',\lambda') \in H(\Div,\Omega)\times H^1(\partial\Omega)$ 
    \begin{subequations}
        \label{eq:vappr:var:12}
        \begin{align}
        \nonumber
        \int_{\Omega} \Big(1 - \frac{\imag\omega(\eta+\eta')\delta_{N=2}}{\rho_0c^2}\Big)\Div \apprA{\vv}{N}\Div \vv' - \frac{\omega^2}{c^2}\apprA{\vv}{N}\cdot\vv'\,\dx\quad \\
        - \int_{\partial\Omega} \apprA{\lambda}{N}\,\vv'\cdot\vn\,\dsurf %
        &= \int_\Omega \Big(\vf + \frac{\eta\,\delta_{N=2}}{\rho_0^2c^2}\plcurl\scurl\vf\Big)\cdot\vv'\,\dx,
        \label{eq:vappr:var:12:1}\\
        \label{eq:vappr:var:12:2}
        \int_{\partial\Omega}\apprA{\vv}{N}\cdot\vn\, \lambda' 
        + \frac{c^2}{\omega^2}\frac{(1+\imag)\sqrt{\frac{\eta}{2\omega\rho_0}} + \frac{\imag\eta\,\delta_{N=2}}{2\omega\rho_0}\kappa}{1-\frac{\imag\omega(\eta+\eta')\delta_{N=2}}{\rho_0c^2}}\;\partial_\Gamma \apprA{\lambda}{N} \partial_\Gamma \lambda'  
        \,\dsurf
        &= \int_{\partial\Omega} \Big(\frac{1-\imag}{\omega\rho_0}\sqrt{\frac{\eta}{2\omega\rho_0}}
        + \frac{\eta\,\delta_{N=2}}{2\omega^2\rho_0^2}\kappa\Big)
        \vf\cdot\vn^\bot \partial_\Gamma\lambda'\,\dsurf.
        \end{align}
    \end{subequations}
    
    When the far field velocity is computed, we may obtain {\em
        a-posteriori} the far field pressure in $\Omega$ of order 0, 1 and 2
    by %
    \begin{align}
    \label{eq:vN1:pN}
    \apprA{p}{N} &= -\frac{\imag \rho_0 c^2}{\omega}\,\Div \apprA{\vv}{N}\ .
    \end{align}
    Moreover, the near field velocity $\vv_{\appr,N}^{BL}$ is then given by~\eqref{eq:vappr:BL} and~\eqref{eq:vappr:phi}.
    
    \subsection{Well-posedness and modelling error}
    
    Obviously, the system for the limit pressure~\eqref{eq:pappr:0} has no unique solutions for frequencies $\omega > 0$ for that $\frac{\omega^2}{c^2}$ is an eigenvalue of $-\Delta$ -- the eigenfrequencies. 
    In~\cite{Schmidt.Thoens.Joly:2014} we have shown that the limit velocity system~\eqref{eq:vappr:0} has eigensolutions for the same frequencies, for which it does not provide a unique solution. %
    If $\omega$ takes such a value by the Fredholm alternative~\cite{Sauter.Schwab:2011}, the systems provides solutions if the source is orthogonal to all eigenfunctions. %
    This is, however, in practise rather unlikely.
    The additional dissipative term in the pressure and velocity systems of order $1$ are not enough to guarantee uniqueness for all frequencies in general. There might be eigenfunctions 
    of the pressure limit systems that do not vary on $\partial\Omega$ such that they satisfy the first order pressure system~\eqref{eq:pappr:1} with $\vf=\zerobf$. %
    Also eigenfunction of the velocity limit systems whose divergence on $\partial\Omega$, \ie, the Neumann trace, do not vary are eigenfunctions of 
    the first order velocity system~\eqref{eq:vappr:1}. %
    Only the volumic dissipative term of the two systems of order $2$ guarantee, as for the original model, for existence and uniqueness for all frequencies $\omega > 0$. %
    These properties will be shown and discussed by numerical experiments in Sec.~\ref{sec:NumResults}. %
    However, in the analysis we assume that $\omega$ is not an eigenfrequency of the limit system.
    
    \begin{theorem}[Stability, existence and uniqueness of $(\apprA{\vv}{N}, \apprA{p}{N})$]
        Let $\Omega$ be an open Lipschitz domain whose boundary is piecewise $C^2$ for $N = 2$, let $\frac{\omega^2}{c^2}$ be distinct from the Neumann eigenvalues
        of $-\Delta$ of $\Omega$ and let 
        $\vf \in H(\scurl,\Omega)$ and $\plcurl\scurl\vf \in H(\scurl,\Omega)$ for $N = 2$.
        Then, there exists a constant $\eta_0 > 0$ such that for all $\eta \in (0,\eta_0)$
        the systems~\eqref{eq:pappr:0}--\eqref{eq:pappr:2} provide unique solutions $\apprA{p}{N}$, $N=0,1,2$
        and the systems~\eqref{eq:vappr:0}--\eqref{eq:vappr:2} provide unique solutions $\apprA{\vv}{N}$, $N=0,1,2$, respectively.
        Furthermore, there exists a constant $C$ independent of $\eta$ such that 
        the stability estimates 
        \begin{subequations}
            \label{eq:pappr:stability}    
            \begin{align}
            \|\apprA{\vv}{N}\|_{(H(\Div,\Omega)} + \|\apprA{p}{N}\|_{H^1(\Omega)} %
            &\leq C\, \Big(\|\vf\|_{L^2(\Omega)} 
            + \eta\,\delta_{N=2} \|\plcurl\scurl\vf\|_{L^2(\Omega)}\Big)\ ,\\     
            \|\scurl\apprA{\vv}{N}\|_{L^2(\Omega)} 
            &\leq C\, \Big(\|\scurl \vf\|_{L^2(\Omega)} 
            + \eta\,\delta_{N=2} \|\scurl \plcurl\scurl\vf\|_{L^2(\Omega)}\Big)
            \end{align}
        \end{subequations}
        holds. Moreover, the approximative models are equivalent as the identities~\eqref{eq:pN1:vN} and~\eqref{eq:vN1:pN} hold.
        \label{lem:stability}
    \end{theorem}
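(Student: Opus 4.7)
The plan is to first establish the equivalence of the pressure and velocity formulations through the algebraic identities~\eqref{eq:pN1:vN} and~\eqref{eq:vN1:pN}, which reduces the analysis to a single family. A direct calculation shows that if $\apprA{p}{N}$ solves the pressure system~\eqref{eq:pappr:0}--\eqref{eq:pappr:2}, then $\apprA{\vv}{N}$ defined by~\eqref{eq:pN1:vN} solves the corresponding velocity system: one takes the divergence of~\eqref{eq:pN1:vN} and substitutes the Helmholtz-type PDE for $\apprA{p}{N}$ to obtain~\eqref{eq:vappr:0:PDE}--\eqref{eq:vappr:2:PDE}, and evaluates~\eqref{eq:pN1:vN} on $\partial\Omega$ to convert the pressure Wentzell condition into the velocity Wentzell condition; the converse direction is symmetric. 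This reduces the main work to the pressure side, and it also establishes the identities~\eqref{eq:pN1:vN}--\eqref{eq:vN1:pN} claimed at the end of the theorem.

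For existence and uniqueness of $\apprA{p}{N}$ I would view the sesquilinear form as a perturbation $a_\eta = a_0 + \sqrt{\eta}\,b_1 + \eta\,b_2$ of the standard Neumann--Helmholtz form $a_0$ on $H^1(\Omega)$ (respectively on $H^1(\Omega)\cap H^1(\partial\Omega)$ for $N\geq 1$). The form $a_0$ is Fredholm of index zero by the usual Lax--Milgram-plus-compact decomposition, and the extra contributions in $b_1$ and $b_2$ are either compact relative to the coercive principal part (via the compact embeddings $H^1(\Omega)\hookrightarrow L^2(\Omega)$ and $H^1(\partial\Omega)\hookrightarrow L^2(\partial\Omega)$) or themselves Gårding-type forms with an imaginary sign. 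Hence $a_\eta$ remains Fredholm of index zero on $H^1(\Omega)\cap H^1(\partial\Omega)$, and well-posedness reduces to injectivity for $\eta$ small.

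Injectivity and the uniform stability bound I would prove together by a compactness/contradiction argument. Suppose a sequence $\apprA{p}{N}_{\eta_k}$ of homogeneous solutions exists with $\|\apprA{p}{N}_{\eta_k}\|_{H^1(\Omega)}=1$ and $\eta_k\to 0$; testing the weak formulation with $\apprA{p}{N}_{\eta_k}$ itself and taking imaginary parts isolates the dissipative boundary term $\sqrt{\eta_k}\,\|\partial_\Gamma \apprA{p}{N}_{\eta_k}\|_{L^2(\partial\Omega)}^2$ (and, for $N=2$, the volumic term $\eta_k\,\|\nabla \apprA{p}{N}_{\eta_k}\|_{L^2(\Omega)}^2$), both of which must vanish in the limit. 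Rellich compactness then yields a strong $L^2(\Omega)$ limit $p_0$ satisfying the homogeneous problem~\eqref{eq:pappr:0}; the non-eigenvalue hypothesis forces $p_0=0$, contradicting the normalisation once it is shown that the $H^1$-norms converge. The stability estimate~\eqref{eq:pappr:stability} for $\apprA{p}{N}$ and for $\apprA{\vv}{N}$ in $H(\Div,\Omega)$ then follows by the standard Fredholm duality argument with constants independent of $\eta$, using~\eqref{eq:pN1:vN} to transfer bounds to the velocity.

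The curl part of~\eqref{eq:pappr:stability} is immediate from the velocity PDE itself: applying $\scurl$ to~\eqref{eq:vappr:0:PDE}--\eqref{eq:vappr:2:PDE} annihilates the $\nabla\Div$ term and yields the pointwise algebraic identity $(\omega^2/c^2)\scurl \apprA{\vv}{N} = (\imag\omega/(\rho_0 c^2))\scurl\vf + (\eta\,\delta_{N=2}/(\rho_0^2 c^2))\scurl\plcurl\scurl\vf$, which gives the bound without any new work and explains the appearance of the hypothesis $\plcurl\scurl\vf\in H(\scurl,\Omega)$ for $N=2$. The main obstacle I anticipate is the compactness step: the Wentzell coefficient degenerates like $\sqrt{\eta}$, so one must carefully argue that the weak limit inherits the correct Neumann trace in the $\eta\to 0$ limit, particularly in the order-$2$ case where the curvature correction and the volumic complex coefficient interact on $\partial\Omega$ and the ambient Lipschitz regularity of $\Omega$ limits the available trace theorems.
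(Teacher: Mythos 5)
Your proposal is correct in substance and, on the pressure side, follows essentially the paper's own route: the paper's Lemma~\ref{lem:aux_p} proves well-posedness of a canonical Wentzell-type pressure problem by exactly the contradiction/compactness argument you describe (normalised sequence, imaginary-part test to kill the $\sqrt{\eta}$-weighted boundary seminorm, weak limit solving the Neumann--Helmholtz limit problem, Rellich to upgrade to strong convergence, Fredholm alternative for existence), and the curl identity $\scurl\apprA{\vv}{N}=\tfrac{c^2}{\omega^2}\scurl\vgg_\eta$ is obtained in the paper the same way you obtain it, by annihilating the $\nabla\Div$ term.

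Where you genuinely diverge is the velocity system. The paper does \emph{not} derive its well-posedness from the pressure system via the equivalence; it proves it independently in Lemma~\ref{lem:aux_v} by an orthogonal Helmholtz decomposition $\vw_\eta=\vw_{\eta,0}+\nabla\psi_\eta$ of the mixed formulation~\eqref{eq:aux_v:var}, identifying $\vw_{\eta,0}$ explicitly as $\tfrac{c^2}{\omega^2}$ times the $L^2$-projection of the source onto $H_0(\Div 0,\Omega)$ and reducing the potential part to a scalar problem for $\phi_\eta=\Delta\psi_\eta$ of the same canonical type as the pressure problem; only afterwards does it verify the equivalence identities~\eqref{eq:pN1:vN} and~\eqref{eq:vN1:pN}. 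Your reduction-by-equivalence is more economical and can be made to work, but the step you dismiss as ``the converse direction is symmetric'' is precisely where the care is needed: a solution of the mixed formulation~\eqref{eq:vappr:var:12} lives only in $H(\Div,\Omega)\times H^1(\partial\Omega)$, so $\Div\apprA{\vv}{N}$ is a priori only in $L^2(\Omega)$ with no boundary trace, and one must first recover the interior PDE to bootstrap $\Div\apprA{\vv}{N}\in H^1(\Omega)$ and identify the Lagrange multiplier $\apprA{\lambda}{N}$ with the trace of $\alpha_\eta\Div\apprA{\vv}{N}$ before the pressure Wentzell problem (and hence uniqueness) can be invoked; the divergence-free component must also be pinned down separately, since it is invisible to the pressure. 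These are exactly the points the paper's Helmholtz decomposition is designed to handle, so you should either carry out that bootstrapping explicitly or adopt the decomposition. The anticipated difficulty you name at the end (the degenerating Wentzell coefficient in the limit passage) is real but is already resolved by your own imaginary-part estimate, which shows the boundary term drops out of the limit variational identity.
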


    \begin{remark}
        The equivalent systems~\eqref{eq:pappr:2}--\eqref{eq:pj:v:2} and ~\eqref{eq:vappr:2}--\eqref{eq:vN1:pN} provide
        unique solution $(\apprA{\vv}{2},\apprA{p}{2}) \in (H^1(\Omega))^2
        \times H^1(\Omega)$ for any $\omega>0$, however, with a constant $C = C(\eta)$ that may blow up for $\eta\to0$. 
    \end{remark}
    
    \begin{theorem}[Modelling error]
        \label{lem:error}
        Let $\Omega$ be an open smooth domain, let $\frac{\omega^2}{c^2}$ be distinct from the Neumann eigenvalues
        and let $\vf \in (L^2(\Omega))^2$ where $\scurl\vf \in H^m(\Omega)$ for any $m \in \IN$
        and $\vf \in H^m(\wt{\Omega}))^2$ for any $m \in \IN$ in some neighbourhood $\wt{\Omega} \subset
        \Omega$ of $\partial\Omega$, \ie, $\partial\Omega \subset
        \partial\wt{\Omega}$.
        Then,
        the approximative solution
        $(\apprA{\vv}{N}, \apprA{p}{N})$ %
        for $N = 0, 1, 2$ satisfies
        \begin{subequations}
            \ifmtwoan{\begin{align}
                \label{lem:error:1}
                \|p - \apprA{p}{N}\|_{H^1(\Omega)}
                \leq C\, \eta^{\frac{N+1}{2}},
                \end{align} %
                \ifmma{\begin{align}
                    \label{lem:error:1}
                    \|p - \apprA{p}{N}\|_{H^1(\Omega)}
                    \leq C\, \eta^{\frac{N+1}{2}},
                    \end{align}}} %
            and for any $\delta > 0$
            \begin{align}
            \label{lem:error:2}
            \|\vv - \apprA{\vv}{N}\|_{(H^1(\Omega\setminus\overline{\Omega}_\delta))^2} 
            &\leq C_{\delta,N}\, \eta^{\frac{N+1}{2}},
            \end{align}
        \end{subequations} %
        where $\Omega_\delta$ is the original domain without a
        $\delta$-neighbourhood of $\partial\Omega$ and where the constants
        $C$, $C_{\delta,N} > 0$ do not depend on $\eta$. %
        %
    \end{theorem}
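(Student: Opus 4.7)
The plan is to use the complete asymptotic expansion derived in~\cite{Schmidt.Thoens.Joly:2014} as a bridge between the exact solution $(p,\vv)$ and the approximative models. In that reference, the exact pressure is expanded as $p = \sum_{j=0}^{M} \eta^{j/2}\, p_j + r^p_M$ and the exact velocity as $\vv = \sum_{j=0}^M \eta^{j/2}(\vv_j + \vv_j^{BL}) + r^\vv_M$, where the near field $\vv_j^{BL}$ is concentrated in an $O(\sqrt\eta)$ neighbourhood of $\partial\Omega$ and the remainders satisfy $\|r^p_M\|_{H^1(\Omega)} + \|r^\vv_M\|_{H(\Div,\Omega)} \leq C \eta^{(M+1)/2}$ under the assumed smoothness of $\vf$. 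The smoothness hypothesis on $\vf$ and on $\scurl\vf$ ensures that all terms $p_j,\vv_j$ in the truncated expansion are smooth enough for the forthcoming manipulations.

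The first key step is to introduce the truncated far field $\wt{p}_N := \sum_{j=0}^N \eta^{j/2} p_j$ and $\wt{\vv}_N := \sum_{j=0}^N \eta^{j/2}\vv_j$, and verify that $\wt{p}_N$ solves the approximative model~\eqref{eq:pappr:0}--\eqref{eq:pappr:2} up to a residual of size $\eta^{(N+1)/2}$ in the natural dual norm, and similarly for $\wt{\vv}_N$ and~\eqref{eq:vappr:0}--\eqref{eq:vappr:2}. In the bulk, the PDE is verified directly from the recursive equations satisfied by the $p_j$; what requires more care is the boundary condition. This is exactly the content of Sec.~\ref{sec:ImpedanceBC}: the impedance conditions~\eqref{eq:pappr:1:bc} and~\eqref{eq:pappr:2:bc} have been constructed so that when one substitutes the telescoping sum over $j\leq N$, the terms coming from the Neumann trace of $\vv_j$ and from the near field contribution cancel order by order against the Wentzell-type tangential derivative terms, modulo a residual governed by the first neglected order $\eta^{(N+1)/2}$. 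At order $N=2$ the curvature term $\partial_\Gamma(\kappa\,\partial_\Gamma \cdot)$ is exactly the remainder arising from expanding the boundary layer corrector in local coordinates~\eqref{eq:localCoord}, and this matches the form derived in \cite[Lemma~A.1]{Schmidt.Thoens.Joly:2014}.

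Once the consistency residual is under control, I would apply Theorem~\ref{lem:stability} to the difference $\apprA{p}{N} - \wt{p}_N$ (respectively $\apprA{\vv}{N}-\wt{\vv}_N$), whose right-hand side is precisely the consistency residual just controlled, yielding $\|\apprA{p}{N}-\wt{p}_N\|_{H^1(\Omega)} \lesssim \eta^{(N+1)/2}$ and the analogous estimate in $H(\Div,\Omega)$ for the velocity. Combining this with $\|p-\wt{p}_N\|_{H^1(\Omega)}\lesssim \eta^{(N+1)/2}$ from the justified expansion gives~\eqref{lem:error:1} by the triangle inequality. For~\eqref{lem:error:2}, on $\Omega\setminus\overline{\Omega}_\delta$ the exponential factor $\mathrm{e}^{-(1-\imag)s\sqrt{\omega\rho_0/(2\eta)}}$ in the boundary layer contribution~\eqref{eq:vappr:phi} renders the near field terms $\vv_j^{BL}$ smaller than any power of $\eta$, so the far-field estimate transfers to an $H^1$-estimate of $\vv-\apprA{\vv}{N}$ away from the wall, with a constant depending on $\delta$ through the $\mathrm{e}^{-c\delta/\sqrt\eta}$ bound.

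The main obstacle will be the verification of the consistency of the Wentzell-type boundary condition at order $N=2$ with all curvature and volume-viscosity corrections simultaneously present: one has to track the interplay of the operators $E_0, E_1, E_2$ acting on the tangential component $\vv\cdot\vn^\bot$, the second-order Taylor expansion of the metric near $\partial\Omega$ along~\eqref{eq:localCoord}, and the regrouping of $\nabla\Div$ versus $\Delta$ terms that produces the $\plcurl\scurl\vf$ right-hand side in~\eqref{eq:vappr:2:bc}. This is essentially an algebraic bookkeeping exercise prepared by Sec.~\ref{sec:ImpedanceBC}, but it is the step where the precise form of the impedance coefficients must be matched term by term against the asymptotic expansion of \cite{Schmidt.Thoens.Joly:2014}.
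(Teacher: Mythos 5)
Your overall strategy is the one the paper follows: compare both the exact solution and the approximative solution to the same truncated far--field expansion $\sum_{j=0}^{N}(2\eta/(\omega\rho_0))^{j/2}\vv^j$ (resp.\ $p^j$), control the first difference by the justified expansion of~\cite{Schmidt.Thoens.Joly:2014}, control the second by consistency plus stability of the (canonical) approximative systems, and conclude by the triangle inequality; the exponential smallness of the near field on $\Omega\setminus\overline{\Omega}_\delta$ handles~\eqref{lem:error:2}. This is precisely the content of Lemma~\ref{lem:error:farfield} combined with the paper's one--line proof of the theorem.

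There is, however, one concrete gap in the step ``apply the stability theorem to $\apprA{\vv}{N}-\wt{\vv}_N$, whose right--hand side is the consistency residual, yielding $O(\eta^{(N+1)/2})$.'' The stability estimates~\eqref{eq:aux_p:stability} and~\eqref{eq:aux_v:stability} measure the boundary datum in the weighted norm $\|\beta_\eta^{-1/2}h_\eta\|_{L^2(\partial\Omega)}$, and since $|\beta_\eta|\sim\sqrt{\eta}$ this costs a factor $\eta^{-1/4}$: a boundary consistency residual of size $\eta^{(N+1)/2}$ only yields $\|\apprA{\vv}{N}-\wt{\vv}_N\|_{H(\Div,\Omega)}\lesssim\eta^{(2N+1)/4}$, which falls short of the claimed rate by $\eta^{1/4}$ (this is exactly the exponent appearing in~\eqref{eq:delta_vv_N_n:estimate} of the paper). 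The paper repairs this by expanding the \emph{approximative} solution one order further, i.e.\ constructing an additional term $\vv^{N,N+1}$ (which is not a term of the exact expansion but solves the auxiliary problem~\eqref{eq:v_N_j} with data built from $\vv^0,\dots,\vv^N$), writing $\delta\apprA{\vv}{N,N}=(2\eta/(\omega\rho_0))^{(N+1)/2}\vv^{N,N+1}+\delta\apprA{\vv}{N,N+1}$, and bounding the first term directly by its $\eta$-independent norm and the second by the stability estimate at the next order. Your sketch, which only uses the truncated exact far field $\wt{\vv}_N$, has no mechanism to recover this quarter power; to complete the argument you must introduce these extra terms and establish their well--posedness and regularity (the role of Lemma~\ref{lem:vj:regularity}, which guarantees that the boundary data $G_\ell(\partial_t\Div\vv^{N,j-\ell})$ make sense). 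The remaining ingredients of your proposal --- the order--by--order cancellation on the boundary prepared in Sec.~\ref{sec:ImpedanceBC} and the exponential decay of the boundary layer away from the wall --- are consistent with the paper's treatment.
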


    The proofs will be given in Sec.~\ref{sec:justification}.

    
    \section{Derivation of impedance boundary conditions}
    \label{sec:ImpedanceBC}
    
    
    \subsection{Equations for asymptotically small viscosity}
    
    To investigate the solution of~\eqref{eq:navier.stokes} for small
    viscosities, we introduce a small dimensionless parameter $\eps \ll
    1$, $\eps \in \IR^+$ and replace $\eta, \eta'$ by
    {$\eps^2\omega\rho_0/2$, $\eps^2\gamma'\omega\rho_0/2$ (corresponding
        to $\eta_0 = \omega\rho_0/2$, $\eta_0' = \gamma'\,\omega\rho_0/2$
        in~\cite{Schmidt.Thoens.Joly:2014}), respectively.}  %
    In this way the
    boundary layer thickness will become proportional to $\eps$. %
    The solution of~\eqref{eq:navier.stokes}, 
    respectively, will be labelled $\vv^\eps$ and $p^\eps$ due to its
    dependence on~$\eps$, \ie,
    \begin{subequations}
        \label{eq:navier.stokes:eps}
        \begin{align}
        -\imag\omega\rho_0 \vv^\eps  +\nabla p^\eps - {\eps^2\frac{\omega\rho_0}{2}\Delta \vv^\eps -\eps^2\frac{\gamma'\,\omega\rho_0}{2}
            \nabla \Div \vv^\eps} &= \vf,%
        \hspace{4em}\text{in }\Omega,
        \label{eq:navier.stokes:eps:mom} \\ 
        -\imag\omega p^\eps + \rho_0 c^2\Div \vv^\eps &= 0,%
        \hspace{4em}\text{in }\Omega, \label{eq:navier.stokes:eps:pr}\\
        \label{eq:navier.stokes:eps:bound}
        \vv^\eps &= \zerobf, \hspace{4em} \text{on } \partial\Omega.
        \end{align}
    \end{subequations}
    %
    
    Earlier we have proved stability for such a problem for the non-resonant
    case, which we consider here as well, \ie, for vanishing
    viscosity and so absorption, the kernel of the system is empty -- there
    is no eigensolution. %
    The eigenvalues of the limit problem coincide with the Neumann
    eigenvalue of $-\Delta$.
    \begin{lemma}[Stability for the non-resonant case]
        \label{lem:stability:2}
        For any $\vf \in (H^0(\Div,\Omega))\cap H(\scurl,\Omega))'$ the
        system~\eqref{eq:navier.stokes:eps} has a unique solution
        $(\vv^\eps,p^\eps) \in H_0(\Div,\Omega)\cap H(\scurl,\Omega) \times L^2(\Omega)$. %
        If $\tfrac{\omega^2}{c^2}$ is not a Neumann eigenvalue of $-\Delta$, 
        then there exists a constant $C > 0$ independent of $\eps$, such that
        \begin{subequations}
            \label{eq:navier.stokes:stability:nondim}    
            \begin{align}
            \label{eq:navier.stokes:stability:1}    
            \|\vv^\eps\|_{H(\Div,\Omega)} +\eps\,\|\scurl\vv^\eps\|_{L^2(\Omega)} + \|p^\eps\|_{L^2(\Omega)} %
            &\leq C\, \|\vf\|_{(H_0(\Div,\Omega)\cap H(\scurl,\Omega))'}, \\ %
            \label{eq:navier.stokes:stability:2}    
            \|\nabla p^\eps\|_{L^2(\Omega)} &\leq C\,\|\vf\|_{L^2(\Omega)}.
            \intertext{%
                For any $\omega>0$ and for $C^{1,1}$ boundary $\partial\Omega$ it holds }
            \eps\, \|\vv^\eps\|_{(H^1(\Omega))^2} &\leq C\,
            \|\vf\|_{(H_0(\Div,\Omega)\cap H(\scurl,\Omega))'}.
            \label{eq:navier.stokes:stability:3}      	
            \end{align}
        \end{subequations}
    \end{lemma}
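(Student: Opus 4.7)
The plan is to reformulate \eqref{eq:navier.stokes:eps} as a single variational problem for the velocity by eliminating the pressure via $p^\eps = -\imag\rho_0 c^2\omega^{-1}\Div\vv^\eps$ (from~\eqref{eq:navier.stokes:eps:pr}) and using the 2D identity $-\Delta = \curl\scurl - \nabla\Div$. This yields the sesquilinear form
\begin{equation*}
a_\eps(\vu,\vv') = -\imag\omega\rho_0(\vu,\vv')_{L^2} + \eps^2\tfrac{\omega\rho_0}{2}(\scurl\vu,\scurl\vv')_{L^2} + \Bigl(\imag\tfrac{\rho_0 c^2}{\omega}+\eps^2\tfrac{(1+\gamma')\omega\rho_0}{2}\Bigr)(\Div\vu,\Div\vv')_{L^2}
\end{equation*}
posed on $H_0^1(\Omega)^2$ with the no-slip condition built in. At fixed $\eps>0$, the real parts of the $\scurl$- and $\Div$-coefficients combined with Friedrichs--Weber-type inequalities give $H_0^1$-coercivity of the principal part modulo the compact $L^2$ perturbation $-\imag\omega\rho_0(\cdot,\cdot)$, so the Fredholm alternative applies. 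Uniqueness for $\vf=\zerobf$ at $\eps>0$ is immediate: real and imaginary parts of $a_\eps(\vv^\eps,\vv^\eps)=0$ successively give $\scurl\vv^\eps = \Div\vv^\eps = 0$ and then $\vv^\eps=\zerobf$. Hence existence, uniqueness and the (non-uniform in $\eps$) $H^1_0$-bound are obtained for every $\omega>0$.

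The uniform stability \eqref{eq:navier.stokes:stability:1} is then proved by contradiction. Assume a normalised sequence $\eps_n\to 0$, $\vf_n\to 0$ in $V':=(H_0(\Div,\Omega)\cap H(\scurl,\Omega))'$, and solutions $(\vv_n,p_n)$ with $\|\vv_n\|_{H(\Div,\Omega)}+\eps_n\|\scurl\vv_n\|_{L^2}+\|p_n\|_{L^2}=1$. By Weber's compactness theorem, $V$ embeds compactly into $L^2(\Omega)^2$, so up to a subsequence $\vv_n\rightharpoonup\vv^\star$ in $V$, $\vv_n\to\vv^\star$ strongly in $L^2$, and $p_n\rightharpoonup p^\star$ in $L^2$. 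Passing to the distributional limit in \eqref{eq:navier.stokes:eps:mom}--\eqref{eq:navier.stokes:eps:pr}, the viscous contributions vanish because of the $\eps_n^2$ prefactor together with the uniform bounds on $\eps_n\|\scurl\vv_n\|_{L^2}$ and on $\|\Div\vv_n\|_{L^2}$; the limit satisfies $-\imag\omega\rho_0\vv^\star+\nabla p^\star = 0$, $-\imag\omega p^\star + \rho_0 c^2\Div\vv^\star = 0$ in $\Omega$, with only $\vv^\star\cdot\vn = 0$ on $\partial\Omega$ (the tangential trace is lost, which is precisely the disappearance of the boundary layer). Eliminating $\vv^\star$ reveals that $p^\star$ solves the Neumann eigenvalue problem $\Delta p^\star + \tfrac{\omega^2}{c^2}p^\star = 0$ with $\partial_\vn p^\star = 0$; the hypothesis on $\omega^2/c^2$ forces $p^\star=0$ and hence $\vv^\star=\zerobf$. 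Plugging this back into the energy identity (take real and imaginary parts of $a_{\eps_n}(\vv_n,\vv_n)=\langle\vf_n,\vv_n\rangle$) drives $\|\Div\vv_n\|_{L^2}$, $\eps_n\|\scurl\vv_n\|_{L^2}$, $\|\vv_n\|_{L^2}$ and hence $\|p_n\|_{L^2}$ all to zero, contradicting the normalisation.

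Estimate \eqref{eq:navier.stokes:stability:2} follows by rewriting \eqref{eq:navier.stokes:eps:mom} in the form $\nabla p^\eps = \vf + \imag\omega\rho_0\vv^\eps - \eps^2\tfrac{\omega\rho_0}{2}\curl\scurl\vv^\eps + \eps^2\tfrac{\omega\rho_0(1+\gamma')}{2}\nabla\Div\vv^\eps$, and substituting $\nabla\Div\vv^\eps = \tfrac{\imag\omega}{\rho_0 c^2}\nabla p^\eps$ from the a-posteriori relation. This gives $\bigl(1-\imag\eps^2\tfrac{\omega^2(1+\gamma')}{2c^2}\bigr)\nabla p^\eps = \vf + \imag\omega\rho_0\vv^\eps - \eps^2\tfrac{\omega\rho_0}{2}\curl\scurl\vv^\eps$; the left coefficient is bounded away from zero for small $\eps$, and the rightmost term is absorbed by observing that $\eps^2\curl\scurl\vv^\eps$ equals a combination of $\vf$, $\vv^\eps$ and $\nabla p^\eps$ via the momentum equation and is therefore controlled in $L^2$ by $\|\vf\|_{L^2}$ and the already proven bound on $\|\vv^\eps\|_{L^2}$. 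Finally, \eqref{eq:navier.stokes:stability:3} follows by viewing \eqref{eq:navier.stokes:eps:mom} as a Stokes-type elliptic system $-\eps^2\tfrac{\omega\rho_0}{2}\Delta\vv^\eps = \vf + \imag\omega\rho_0\vv^\eps - \nabla p^\eps + \eps^2\tfrac{\gamma'\omega\rho_0}{2}\nabla\Div\vv^\eps$ with homogeneous Dirichlet data and applying standard $H^2$-regularity on the $C^{1,1}$ domain $\Omega$; the $L^2$-norm of the right-hand side is controlled uniformly in $\eps$ by \eqref{eq:navier.stokes:stability:1}--\eqref{eq:navier.stokes:stability:2}, and the quadratic interpolation $\|\vv^\eps\|_{H^1}^2\lesssim \|\vv^\eps\|_{L^2}\|\vv^\eps\|_{H^2}$ yields the claimed scaling $\eps\|\vv^\eps\|_{H^1}\lesssim\|\vf\|_{V'}$.

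The main obstacle is the limit passage in the contradiction argument: the tangential trace of $\vv_n$ is not controlled uniformly (only $\eps_n\|\scurl\vv_n\|_{L^2}$ is), so one cannot rely on the full no-slip condition in the limit and must work with the strictly weaker space $V$, checking that the reduced limit problem still forces triviality under the Neumann non-eigenvalue assumption. Secondarily, one has to verify carefully that all viscous terms $\eps_n^2\Delta\vv_n$ and $\eps_n^2\nabla\Div\vv_n$ tend to zero in the distributional sense despite their second-order character, which relies crucially on the combined control given jointly by the real and imaginary parts of the energy identity.
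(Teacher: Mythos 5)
The paper itself does not prove Lemma~\ref{lem:stability:2}; it is quoted from \cite[Lemma~2.2]{Schmidt.Thoens.Joly:2014}, so your attempt is necessarily a reconstruction rather than something I can match line by line against the text. Your reformulation as a sesquilinear form on $(H^1_0(\Omega))^2$ and the fixed-$\eps$ Fredholm and uniqueness argument are sound. The contradiction argument for \eqref{eq:navier.stokes:stability:1}, however, has a genuine gap at its pivotal step: you invoke Weber's compact embedding of $V=H_0(\Div,\Omega)\cap H(\scurl,\Omega)$ into $(L^2(\Omega))^2$ for the sequence $\vv_n$, but this requires a uniform bound on $\|\scurl\vv_n\|_{L^2(\Omega)}$, whereas your normalisation only controls $\eps_n\|\scurl\vv_n\|_{L^2(\Omega)}$ with $\eps_n\to 0$. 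This is not a removable technicality: the exact solution develops its boundary layer precisely in the curl, and $\|\scurl\vv^\eps\|_{L^2(\Omega)}$ blows up (like $\eps^{-\nicefrac12}$), so the hypothesis of Weber's theorem genuinely fails and no strongly $L^2$-convergent subsequence can be extracted this way. Without strong convergence the closing step (driving $\|\vv_n\|_{L^2(\Omega)}$ and $\|\Div\vv_n\|_{L^2(\Omega)}$ to zero via the energy identity) does not work, and even $\langle\vf_n,\vv_n\rangle\to 0$ is unjustified, since $|\langle\vf_n,\vv_n\rangle|\le\|\vf_n\|_{V'}\|\vv_n\|_{V}$ and $\|\vv_n\|_{V}$ is not bounded. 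You sense in your closing paragraph that the missing uniform control of the curl is ``the main obstacle'', but you identify its consequence as the loss of the no-slip condition in the limit, whereas the real casualty is compactness. The standard repair is to split $\vv_n$, e.g.\ by a Helmholtz decomposition: the curl-free gradient part lies in $H_0(\Div,\Omega)\cap H(\scurl,\Omega)$ with \emph{both} norms controlled (its curl vanishes), hence is compact in $L^2$, while the divergence-free remainder is handled directly from the equation and the energy identity without any compactness.

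Two further points. Your derivation of \eqref{eq:navier.stokes:stability:2} is circular: you bound $\eps^2\curl\scurl\vv^\eps$ in $L^2(\Omega)$ by re-using the momentum equation, which reintroduces $\nabla p^\eps$ on the right-hand side of its own estimate. The clean route is to take the divergence of the momentum equation (or test it with gradients), which annihilates the $\curl\scurl$ term and yields a Helmholtz-type equation for $p^\eps$ from which the gradient bound follows. For \eqref{eq:navier.stokes:stability:3}, the $H^2$-regularity-plus-interpolation argument delivers at best $\eps\,\|\vv^\eps\|_{(H^1(\Omega))^2}\lesssim\big(\|\vf\|_{V'}\,\|\vf\|_{L^2(\Omega)}\big)^{\nicefrac12}$, and it relies on $\vf\in (L^2(\Omega))^2$ and on \eqref{eq:navier.stokes:stability:1}--\eqref{eq:navier.stokes:stability:2}, hence on the non-resonance assumption; the statement, by contrast, asserts a bound by $\|\vf\|_{V'}$ alone and for \emph{every} $\omega>0$. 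So this part of the claim is not established by your argument either.
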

    A proof can be found in~\cite[Lemma 2.2]{Schmidt.Thoens.Joly:2014}.
    Even so in this work $C^{\infty}$ was assumed, the proof of
    \eqref{eq:navier.stokes:stability:1},
    \eqref{eq:navier.stokes:stability:2} does not rely on a higher
    regularity assumption, see~\cite{Marusic:2000}.  %
    

    
    \subsection{Asymptotic expansion}
    \label{sec:ImpedanceBC:AsympExpan}
    
    {With the above introduced small parameter %
        $\eps = \sqrt{2\eta/(\omega\rho_0)}$} %
    and using the curvilinear coordinates $(t,s)$, which we have
    introduced in~\eqref{eq:localCoord}, close to the boundary the
    solution of~\eqref{eq:navier.stokes:eps} inspired by the framework of
    Vishik and Lyusternik~\cite{Vishik.Lyusternik:1960} could be written~as
    \begin{align}
    \label{ansaz}
    \vv \!= %
    \sum_{j=0}^\infty \eps^j\left(\vv^j + \eps \plcurl (\phi^j\chi)\right);
    \qquad %
    p \!= \sum_{j=0}^\infty \eps^j p^j,
    \end{align}
    where $\vv^j(x,y)$ and $p^j(x,y)$ are terms of the {\em far field}
    expansion, the {\em near field} terms $\phi^j(t,\frac{s}{\eps})$
    represent the boundary layer close to the wall, $\plcurl
    =(\partial_y,-\partial_x)^\top$, and $\chi$ is an admissible cut-off
    function.
    
    \begin{definition}[Admissible cut-off function]
        We denote a monotone function $\chi \in C^\infty(\Omega)$ an
        admissible cut-off function, %
        if there exist constants $0 < s_1 < s_0 <
        \frac12\|\kappa\|_{L^\infty(\Gamma)}^{-1}$ %
        such that $\chi \equiv 0$ outside an $s_0$-neighbourhood of
        $\partial\Omega$ and otherwise $\chi(\vx) = \wh{\chi}(s)$, where
        $\wh{\chi}(s) = 1$ for $s < s_1$. %
        For an admissible cut-off function $\chi$ we denote $\overline{\supp(\chi)}$ the
        $\chi$-neighbourhood of~$\partial\Omega$.
        \label{lem:chi}
    \end{definition}
    
    The method of multiscale expansion separates the far and near field
    terms. %
    We restrict ourselves to $j=0,1,2$, as these will be used for the derivation of the impedance 
    boundary conditions where the equations for general $j \in \IN$ can be found 
    in~\cite{Schmidt.Thoens.Joly:2014}.
    The far field velocity and pressure terms $(\vv^j,p^j)$ satisfy the PDE system
    \begin{subequations}
        \label{eq:vj}
        \begin{align}
        \nabla \Div \vv^{j} +\frac{\omega^2}{c^2} \vv^j &=  \frac{\imag\omega}{\rho_0 c^2} \vf	\cdot\delta_{j=0} + 
        \frac{\imag\omega^2}{2 c^2}\Delta \vv^{j-2} + \frac{\imag\gamma'\omega^2}{2 c^2}
        \nabla \Div \vv^{j-2}, %
        && \text{ in }\Omega,
        \label{eq:vj:wave}\\
        \vv^j \cdot \vn &= \sum_{\ell=1}^j G_\ell(\partial_t \Div\vv^{j-\ell})
        + H_j(\vf)
        , && \text{ on }\partial\Omega,
        \label{eq:vj:bc}\\
        p^j &= - \frac{\imag\rho_0c^2}{\omega} \Div\vv^j,
        && \text{ in }\Omega,
        \label{eq:vj:p}
        \end{align}
    \end{subequations}
    where $\vv^{-1} = \vv^{-2} = \zerobf$, $G_\ell: C^\infty(\Gamma) \to C^\infty(\Gamma)$ and $H_\ell: C^\infty(\Gamma) \to C^\infty(\Gamma)$ are
    tangential differential operators acting on traces of terms of lower orders or
    the trace of $\vf$ on $\partial\Omega$, respectively. %
    Furthermore, $\delta_{j=0}$ stands for the Kronecker symbol which is
    $1$ if $j=0$ and $0$ otherwise. %
    The operators $G_\ell$ and $H_\ell$ up to $\ell=2$ are given by
    \begin{subequations}
        \begin{align}
        G_0 (v) &= 0, &&
        H_0(\vf) = 0, \\[0.5em]
        %
        G_1(v) &= (1+\imag) \frac{c^2}{2\omega^2}\partial_t v, && 
        %
        H_1(\vf) = -(1-\imag) \frac{1}{2\omega\rho_0}\partial_t(\vf\cdot\vn^\bot), \\
        G_2(v) &= \frac{c^2}{\omega^2}\Big(
        \frac{\imag}{4}
        \partial_t(\kappa v)\Big), && 
        %
        H_2(\vf) = - \frac{1}{4\omega\rho_0}\partial_t(\kappa\,\vf\cdot\vn^\bot).
        \end{align}
    \end{subequations}
    The near field terms %
    $\vv^j_{BL} = \sqrt{\frac{2\,\eta}{\omega\rho_0}}\plcurl(\phi^j \chi)$ for
    $\phi^j(\vx) = \wt{\phi}^j(t,S)$ for $S = s\sqrt{\frac{\omega\rho_0}{2\,\eta}}$
    are defined
    by (\cf~\cite[Lemma~A.1]{Schmidt.Thoens.Joly:2014})
    \begin{align}
    \wt{\phi}^j(t,S) &= \frac{1}{2}(1+\imag)\,\mathrm{e}^{-(1-\imag) S}\sum_{\ell=0}^j (E_\ell(\vv^{j-\ell}\cdot\vn^\bot))(t,S),
    \label{eq:nearField:anyOrder}
    \end{align}
    with the operators $E_\ell: C^\infty(\Gamma) \to C^\infty(\Gamma \times [0,\infty))$ for $\ell = 0,1,2$ 
    \begin{subequations}
        \label{eq:El}
        \begin{align}
        E_0 (v) &= v, \\[0.3em]
        E_1 (v) &= \frac 14 (3+\imag)\,\kappa Sv, \\ 
        %
        E_2 (v) &= \frac{\imag (1+\gamma')\omega^2}{2 c^2}v
        + \frac 14 \big(\imag+(1+\imag)S\big)
        \Big(\frac{3}{4}\kappa^2v +\partial^2_tv \Big)
        + \frac{3}{8}\kappa^2S^2v.
        \end{align}
    \end{subequations}

    \subsection{Derivation of the approximative models and impedance boundary conditions for the velocity} 
    \label{sec:ImpedanceBC:Derivation:v}
    
    Now, we are going to derive the approximative velocity {and
        pressure} models {for $\apprA{\vv}{N}$ and $\apprA{p}{N}$
        including} impedance boundary conditions given in
    Sec.~\ref{sec:ModelDef:velocity}. %
    Let $\vv^{\eps,N}:= \sum_{j=0}^N \eps^n \vv^j$. %
    Then, by~\eqref{eq:vj:bc} we have
    \begin{align*}
    \vv^{\eps,N}\cdot\vn &= %
    \sum_{j=0}^N \eps^j \sum_{\ell=1}^j 
    G_\ell(\partial_t\Div \vv^{j-\ell}) +
    \sum_{j=0}^N \eps^j  H_j(\vf)\\
    &= \sum_{\ell=1}^N \eps^\ell
    \sum_{j=\ell}^{N} \eps^j G_\ell(\partial_t\Div\vv^{j-\ell}) + %
    \sum_{j=0}^N \eps^j H_j(\vf)\\ %
    &= \sum_{\ell=1}^N \eps^\ell
    \sum_{j=0}^{N-\ell} \eps^j G_\ell(\partial_t\Div\vv^j) + %
    \sum_{j=0}^N \eps^j H_j(\vf)\\
    &= \sum_{\ell=1}^N \eps^\ell
    G_\ell(\vv^{\eps,N}) + %
    \sum_{j=0}^N \eps^j H_j(\vf) -
    \eps^{N+1}\sum_{\ell=1}^N \sum_{j=0}^{\ell-1}
    \eps^j G_\ell(\partial_t\Div \vv^{j+1-N-\ell})
    \end{align*}
    Moving all the terms with $\vv^{\eps,N}$ from the right hand side to
    the left hand side and neglecting the terms of order $\eps^{N+1}$ on
    the right hand side and using the equality $\eta = \eps^2\omega\rho_0/2$, %
    we obtain the boundary conditions for $\apprA{\vv}{N}$,
    \begin{align}
    \apprA{\vv}{N}\cdot\vn - 
    \sum_{\ell=1}^N (\sqrt{2\eta/(\omega\rho_0)})^\ell 
    G_\ell(\partial_t\Div\apprA{\vv}{N}) &= 
    \sum_{j=0}^N  (\sqrt{2\eta/(\omega\rho_0)})^j H_j(\vf),
    \label{eq:vappr:N:bc}
    \end{align}
    which is~\eqref{eq:vappr:0:bc}, \eqref{eq:vappr:1:bc} and
    \eqref{eq:vappr:2:bc} for $N=0,1,2$.
    
    To obtain the approximative PDEs we are going to
    simplify~\eqref{eq:vj:wave}. %
    Applying $\scurl$ to~\eqref{eq:vj:wave} we obtain
    \begin{align*}
    \scurl\vv^j &= \frac{\imag}{\omega\rho_0}\scurl\vf\cdot\delta_{j=0}
    - \frac{\imag}{2}\scurl\plcurl\scurl \vv^{j-2}.
    \end{align*}
    By recursion in $j$ we obtain an expression of $\scurl\vv^j$ in
    terms of $\vf$ only (see (2.11) in~\cite{Schmidt.Thoens.Joly:2014}).
    Inserting this expression into~\eqref{eq:vj:wave} we obtain
    \begin{align}
    \nabla \Div \vv^{j} +\frac{\omega^2}{c^2} \vv^j &=
    \sum_{\ell=1}^j L_\ell(\vv^{j-\ell})
    + M_j(\vf)
    \label{eq:vj:wave:simplified}
    \end{align}
    with $L_\ell \equiv 0$ if $\ell\not=2$, $M_j \equiv 0$ if $j$ is odd and otherwise
    \begin{align*}
    L_2 &= \frac{\imag(1+\gamma')\omega^2}{2 c^2} \nabla \Div, &
    M_j &= \frac{\imag \omega}{\rho_0c^2}\left(-\frac{\imag}{2}\plcurl\scurl\right)^{j/2}\vf.
    \end{align*}
    Now, in the same away as above, where $G_\ell$ and $H_j$ are
    replaced by $L_\ell$ and $M_j$, we find~\eqref{eq:vappr:0:PDE},~\eqref{eq:vappr:1:PDE} 
    for the approximative velocities $\apprA{\vv}{N}$, $N=0,1$ and for
    $N \geq 2$
    \begin{align}
    \label{eq:vappr:geq2:PDE}
    \left(1 -\frac{\imag\omega(\eta+\eta')}{\rho_0c^2}\right)
    \nabla\Div \apprA{\vv}{N} + \frac{\omega^2}{c^2}\apprA{\vv}{N}
    &= \sum_{j=0}^N (\sqrt{2\eta/(\omega\rho_0)})^j M_j(\vf),
    \end{align}
    which is equivalent to~\eqref{eq:vappr:2:PDE} for $N = 2$. %
    
    Note, that it is possible to keep a term with
    $\plcurl\scurl\apprA{\vv}{N}$ on the left hand side and with the
    gain of the simple source term $\frac{\imag\omega}{\rho_0c^2}\vf$ on
    the right hand side (for any $N$). %
    However, this PDE needs a further boundary condition, \eg, a
    prescribed trace of $\scurl\apprA{\vv}{N}$ in terms of $\vf$.
    Finally, using~\eqref{eq:navier.stokes:pr} we find the pressure approximation $\apprA{p}{N}$ defined by~\eqref{eq:vN1:pN}
    and in a similar way as the equations for the far field velocity we obtain using~\eqref{eq:nearField:anyOrder} the near field velocity approximation $\apprA{\vw^{BL}}{N}$ defined by~\eqref{eq:vappr:BL}. %

    \subsection{Derivation of the approximative models and impedance boundary conditions for the pressure} 
    \label{sec:ImpedanceBC:Derivation:p}
    
    Taking the divergence of~\eqref{eq:vappr:0:PDE},~\eqref{eq:vappr:1:PDE} for $N=0,1$ we find that $\apprA{p}{N} = -\frac{\imag \rho_0 c^2}{\omega}\,\Div \apprA{\vv}{N}$ 
    satisfies~\eqref{eq:pappr:0:PDE},~\eqref{eq:pappr:1:PDE}. %
    Taking in the same way the divergence of~\eqref{eq:vappr:geq2:PDE} for $N\geq2$ we find that $\apprA{p}{N}$ fulfills for $N \geq 2$
    \begin{align*}
    \left(1 -\frac{\imag\omega(\eta+\eta')}{\rho_0c^2}\right)
    \Delta\apprA{p}{N} + \frac{\omega^2}{c^2}\apprA{p}{N}
    &= \sum_{j=0}^N -\frac{\imag \rho_0 c^2}{\omega} (\sqrt{2\eta/(\omega\rho_0)})^j \Div M_j(\vf) = \Div\vf\ ,
    \end{align*}
    where we used that the divergence of $\plcurl$ vanishes for smooth enough functions. 
    
    Now, for $N = 0$ using~\eqref{eq:vappr:0} we have
    \begin{align*}
    \nabla \apprA{p}{0}\cdot\vn &= -\frac{\imag \rho_0 c^2}{\omega}\,\nabla\Div \apprA{\vv}{0}\cdot\vn
    = \imag \rho_0\omega \apprA{\vv}{0}\cdot\vn + \vf\cdot\vn = \vf\cdot\vn
    \end{align*}
    which is~\eqref{eq:pappr:0:bc}. Similarly, we find for $N = 1$ using~\eqref{eq:vappr:1}
    \begin{align*}
    \nabla \apprA{p}{1}\cdot\vn &= \imag \rho_0\omega \apprA{\vv}{1}\cdot\vn + \vf\cdot\vn = 
    \frac{\imag \rho_0 c^2}{\omega} (1+\imag)\sqrt{\frac{\eta}{2\omega\rho_0}}\partial_t^2\Div\apprA{\vv}{1}\cdot\vn
    + \vf\cdot\vn - (1+\imag)\sqrt{\frac{\eta}{2\omega\rho_0}}\partial(\vf\cdot\vn^\bot)\\
    &= - (1+\imag)\sqrt{\frac{\eta}{2\omega\rho_0}}\partial_t^2\Div\apprA{p}{1}\cdot\vn
    + \vf\cdot\vn - (1+\imag)\sqrt{\frac{\eta}{2\omega\rho_0}}\partial(\vf\cdot\vn^\bot)\ ,
    \end{align*}
    which is~\eqref{eq:pappr:1:bc}. Finally, for $N \geq2$ we obtain using~\eqref{eq:vappr:N:bc},\eqref{eq:vappr:geq2:PDE}
    \begin{align*}
    \left(1 -\frac{\imag\omega(\eta+\eta')}{\rho_0c^2}\right)\nabla \apprA{p}{N}\cdot\vn 
    &= - \left(1 -\frac{\imag\omega(\eta+\eta')}{\rho_0c^2}\right)\frac{\imag \rho_0 c^2}{\omega}\,\nabla\Div \apprA{\vv}{N}\cdot\vn
    = \imag \rho_0\omega \apprA{\vv}{N}\cdot\vn - \frac{\imag \rho_0 c^2}{\omega}\sum_{j=0}^N (\sqrt{2\eta/(\omega\rho_0)})^j M_j(\vf) \\
    &= \imag \rho_0\omega \sum_{\ell=1}^N (\sqrt{2\eta/(\omega\rho_0)})^\ell 
    G_\ell(\partial_t\Div\apprA{\vv}{N}) 
    - \frac{\imag \rho_0 c^2}{\omega}\sum_{j=0}^N (\sqrt{2\eta/(\omega\rho_0)})^j \Big( M_j(\vf) - \frac{\omega^2}{c^2} H_j(\vf) \Big)\\
    &= -\frac{\omega^2}{c^2}\sum_{\ell=1}^N (\sqrt{2\eta/(\omega\rho_0)})^\ell 
    G_\ell(\partial_t\apprA{p}{N}) 
    - \frac{\imag \rho_0 c^2}{\omega}\sum_{j=0}^N (\sqrt{2\eta/(\omega\rho_0)})^j \Big( M_j(\vf) - \frac{\omega^2}{c^2} H_j(\vf) \Big)\ ,
    \end{align*}
    which is~\eqref{eq:pappr:2:bc} for $N = 2$.
    
    In view of~\eqref{eq:vappr:0} and~\eqref{eq:vappr:1} for $N=0,1$ we find that far field velocity approximation $\apprA{\vv}{N}$ 
    can be computed from the pressure approximation $\apprA{p}{N}$ by~\eqref{eq:pj:v:01}. For $N \geq 2$ we obtain a similar relation using~\eqref{eq:vappr:geq2:PDE}
    as
    \begin{align}
    \nonumber
    \apprA{\vv}{N} %
    &= -\frac{c^2}{\omega^2} \left(1 -\frac{\imag\omega(\eta+\eta')}{\rho_0c^2}\right) \nabla \Div \apprA{\vv}{N}
    + \frac{c^2}{\omega^2} \sum_{j=0}^N (\sqrt{2\eta/(\omega\rho_0)})^j M_j(\vf)\\
    &= -\frac{\imag}{\rho_0\omega} \left(1 -\frac{\imag\omega(\eta+\eta')}{\rho_0c^2}\right) \nabla \apprA{p}{N}
    + \frac{c^2}{\omega^2} \sum_{j=0}^N (\sqrt{2\eta/(\omega\rho_0)})^j M_j(\vf)\ ,
    \label{eq:pj:v:N}
    \end{align}
    which is~\eqref{eq:pj:v:2} for $N = 2$.

    %
    %

    
    \section {Justification of the approximative models}
    \label{sec:justification}
    
    In this section we first define canonical approximative pressure and velocity systems that generalizes the derived approximative models and show their well-posedness.
    Even so we derived the approximative models for smooth domains the analysis of the canonical approximative systems requires less regularity. 
    Considering the canonical systems we will not only benefit from a more compact notation, but more general source terms will allow us to prove the bounds on the modelling error. %
    For this higher regularity estimates of the canonical systems are needed.
    
    \subsection{Well-posedness for a canonical approximative pressure system}
    
    In this section we analyse the well-posedness of a class of canonical approximative pressure problems
    \begin{subequations}
        \label{eq:aux_p}
        \begin{align}
        \label{eq:aux_p:PDE}
        \Div \big( \alpha_\eta \nabla p_\eta\big) + 
        \tfrac{\omega^2}{c^2} p_\eta &= -\Div\vgg_\eta, \qquad \text{ in }\Omega, \\[0.2em]
        \label{eq:aux_p:BC}
        \alpha_\eta \nabla p_\eta\cdot\vn - \partial_t \big(\beta_\eta\partial_t p_\eta\big)%
        &= \vgg_\eta\cdot\vn + 
        \partial_t h_\eta,
        \quad \text{ on }\partial\Omega, 
        \end{align}
    \end{subequations}
    with $\alpha_\eta \in \IC$, $\beta_\eta \in L^\infty(\partial\Omega)$. %
    For $\Omega$ smooth enough the weak formulation of~\eqref{eq:aux_p} is given as: Seek $p_\eta \in H^1_{\beta_\eta} := \{ q \in H^1(\Omega) : \sqrt{\beta_\eta}q \in H^1(\partial\Omega)\}$ such that for all $q' \in H^1(\Omega) \cap H^1(\partial\Omega)$
    \begin{align}
    \label{eq:aux_p:var}
    \int_{\Omega} \alpha_\eta \nabla p_\eta \cdot \nabla q' - \tfrac{\omega^2}{c^2} p_\eta q' \,\dx + \int_{\partial\Omega} \beta_\eta\partial_t p_\eta \partial_t q'\,\dsurf
    &= \int_{\Omega} \vgg_\eta \cdot\nabla q'\, \dx + \int_{\partial\Omega} h_\eta \partial_t q'\,\dsurf\ .
    \end{align}
    The approximative pressure systems~\eqref{eq:pappr:1}
    and~\eqref{eq:pappr:2} of order 1 or 2, respectively, belongs to this canonical approximative pressure system. %
    If we indicate the respective functions for the system of order $N$ with a superscript we find that
    \begin{align*}
    \alpha_\eta^1 &= 1, \; %
    \alpha_\eta^2 =  1 - \frac{\imag\omega(\eta + \eta')}{\rho_0c^2}, &
    %
    \beta_\eta^1 &= (1 + \imag)\sqrt{\frac{\eta}{2\omega\rho_0}}, &
    \beta_\eta^2 &= \beta_\eta^1 + \frac{\imag\eta}{2\omega\rho_0}\kappa,\\
    %
    \vgg_\eta^1 &= \vgg_\eta^2 = \vf, &
    %
    h_\eta^1 &= 
    - \frac{1+\imag}{\omega\rho_0}\sqrt{\frac{\eta}{2\omega\rho_0}}\vf\cdot\vn^\bot, &
    h_\eta^2 &= h_\eta^1 + \frac{\imag\omega(\eta + \eta')}{\rho_0c^2}\vf\cdot\vn - \frac{\imag\eta}{2\omega\rho_0}\kappa \vf\cdot\vn^\bot.
    \end{align*}

    \begin{lemma}[Well-posedness of the canonical approximative pressure system]
        \label{lem:aux_p}
        Let $\Omega$ be a Lipschitz domain and $\frac{\omega^2}{c^2}$ be distinct from the Neumann eigenvalues
        of $-\Delta$ in $\Omega$. %
        Moreover, let 
        $\alpha_\eta \in \IC$, $\beta_\eta \in L^\infty(\partial\Omega)$
        $\vgg_\eta\in (L^2(\Omega))^2$, $\beta_\eta^{-\nicefrac12}h_\eta\in L^2(\partial\Omega)$,
        depend continuously on $\eta > 0$, where
        $\Im\ \alpha_\eta \leq 0$, 
        $\alpha_\eta \to 1$ in $L^\infty(\Omega)$,
        $\Im\ \beta_\eta \leq -c|\beta|$ for some $c > 0$. %
        Then, there exists a constant $\eta_m > 0$ such that for any $\eta \in (0,\eta_m)$  the formulation~\eqref{eq:aux_p:var}
        has a unique solution $p_\eta \in H^1(\Omega)$. 
        Furthermore, there exists a constant $C=C(\eta_m) > 0$ not depending on~$\eta$ such that
        \begin{align}
        \label{eq:aux_p:stability}
        \|p_\eta\|_{H^1(\Omega)} 
        &\leq C\, \left(\|\vgg_\eta\|_{(L^2(\Omega))^2} + \|\beta_\eta^{-\nicefrac12} h_\eta\|_{L^2(\partial\Omega)} \right)\ .
        \end{align}
    \end{lemma}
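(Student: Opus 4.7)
The plan is to set up the variational problem~\eqref{eq:aux_p:var} in the Fredholm framework on the weighted space $V_\eta := H^1_{\beta_\eta}$ equipped with $\|q\|_{V_\eta}^2 := \|q\|_{H^1(\Omega)}^2 + \int_{\partial\Omega}|\beta_\eta||\partial_t q|^2\dsurf$, reduce well-posedness to uniqueness via a Gårding inequality, and finally derive the $\eta$-uniform stability estimate~\eqref{eq:aux_p:stability} by a compactness/contradiction argument. Denote the left-hand side of~\eqref{eq:aux_p:var} by $b_\eta(p,q)$, which is continuous on $V_\eta\times V_\eta$ by Cauchy--Schwarz. Write $b_\eta(p,q) = a_\eta(p,q) - (1+\omega^2/c^2)\int_\Omega p\bar q\dx$, where the subtracted $L^2$-form is compact on $V_\eta$ by Rellich. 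For $a_\eta$ one checks using $\Re\alpha_\eta\geq \tfrac12$ (valid for small $\eta$ since $\alpha_\eta\to 1$) that $\Re a_\eta(p,p)\geq \tfrac12\|p\|_{H^1(\Omega)}^2 - \int_{\partial\Omega}|\beta_\eta||\partial_t p|^2\dsurf$, while the sign hypotheses $\Im\alpha_\eta\leq 0$ and $\Im\beta_\eta\leq -c|\beta_\eta|$ yield $-\Im a_\eta(p,p)\geq c\int_{\partial\Omega}|\beta_\eta||\partial_t p|^2\dsurf$. The linear combination $\Re a_\eta(p,p) - \tfrac{2}{c}\Im a_\eta(p,p)$ is bounded below by $\tfrac12\|p\|_{V_\eta}^2$, giving the uniform coercivity $|a_\eta(p,p)|\geq c_0\|p\|_{V_\eta}^2$. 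Hence $b_\eta$ is a Fredholm operator of index zero on $V_\eta$ and well-posedness reduces to uniqueness.

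For uniqueness, test the homogeneous problem with $q'=\bar p_\eta$ and take imaginary parts: the sign hypotheses force $\partial_t p_\eta = 0$ almost everywhere on $\{|\beta_\eta|>0\}$, so the boundary condition~\eqref{eq:aux_p:BC} reduces to $\nabla p_\eta\cdot\vn = 0$ (since $\alpha_\eta\neq 0$ for small $\eta$) and the PDE becomes $\Delta p_\eta + (\omega^2/(c^2\alpha_\eta))\,p_\eta = 0$. When $\Im\alpha_\eta < 0$ the spectral parameter $\omega^2/(c^2\alpha_\eta)$ is non-real and therefore cannot equal any eigenvalue of the self-adjoint Neumann Laplacian; when $\alpha_\eta\in\IR$, the discreteness of the Neumann spectrum combined with the gap assumption $\omega^2/c^2\notin\sigma_{\mathrm{Neu}}(-\Delta)$ and $|\alpha_\eta-1|\ll 1$ yield the same conclusion. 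In both cases $p_\eta = 0$, so the Fredholm alternative gives existence and uniqueness of $p_\eta\in V_\eta$ with a bound $\|p_\eta\|_{V_\eta}\leq C(\eta)\bigl(\|\vgg_\eta\|_{L^2(\Omega)}+\|\beta_\eta^{-\nicefrac12}h_\eta\|_{L^2(\partial\Omega)}\bigr)$.

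The remaining step, and the main difficulty, is to replace $C(\eta)$ by a constant uniform in $\eta\in(0,\eta_m)$: the weight $\beta_\eta$ degenerates as $\eta\to 0$, the $V_\eta$-norm collapses to the $H^1(\Omega)$-norm, and the boundary-energy control dissolves in the limit, so the contradiction argument has to absorb this term via the imaginary-part identity before passing to the limit. Argue by contradiction: assume a sequence $\eta_n\in(0,\eta_m)$ and solutions $p_{\eta_n}$ normalised by $\|p_{\eta_n}\|_{H^1(\Omega)}=1$ while $\|\vgg_{\eta_n}\|_{L^2(\Omega)}+\|\beta_{\eta_n}^{-\nicefrac12}h_{\eta_n}\|_{L^2(\partial\Omega)}\to 0$. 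Extract a subsequence with $\eta_n\to\eta_*\in[0,\eta_m]$ and $p_{\eta_n}\rightharpoonup p_*$ weakly in $H^1(\Omega)$. The imaginary-part identity applied to $p_{\eta_n}$ with non-zero but vanishing data yields $\int_{\partial\Omega}|\beta_{\eta_n}||\partial_t p_{\eta_n}|^2\dsurf \to 0$, so the boundary term in~\eqref{eq:aux_p:var} passes to the limit. One identifies $p_*$ as the solution of the homogeneous limit problem: for $\eta_*>0$ this is~\eqref{eq:aux_p:var} with zero data and the uniqueness just established gives $p_*=0$, while for $\eta_*=0$ the limit reduces to the pure Neumann problem for $-\Delta-\omega^2/c^2$ and the non-eigenvalue hypothesis again forces $p_*=0$. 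The compact embedding $H^1(\Omega)\hookrightarrow L^2(\Omega)$ then yields $\|p_{\eta_n}\|_{L^2(\Omega)}\to 0$; inserting this into the real part of the variational identity for $p_{\eta_n}$ with $\Re\alpha_{\eta_n}\to 1$ produces $\|\nabla p_{\eta_n}\|_{L^2(\Omega)}\to 0$, contradicting the normalisation.
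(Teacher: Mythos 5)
Your proposal is correct and follows essentially the same route as the paper: the heart of both arguments is the compactness/contradiction argument for the $\eta$-uniform constant (normalised sequence, imaginary-part identity to kill the boundary energy, weak limit identified as the trivial solution of the Neumann limit problem, Rellich to upgrade to strong $L^2$ convergence, then the real part of the identity to contradict the normalisation), with existence supplied by the Fredholm alternative. The paper merely folds uniqueness into the same contradiction argument and leaves the Gårding/index-zero setup implicit, whereas you spell out the coercivity of the principal part via the linear combination $\Re a_\eta - \tfrac{2}{c}\Im a_\eta$ and give a separate uniqueness proof; this is additional detail rather than a different method.
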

    
    \begin{proof}

        The proof is by contradiction and we suppose, contrary our claim, that the estimate~\eqref{eq:aux_p:stability} is false.
        Then, there exists a sequence $\{\eta_n\}_{n\in\IN}$ with $\eta_n \to 0$, 
        a bounded sequence $\{p_n\}_{n\in\IN}$ with $\|p_n\|_{H^1(\Omega)} = 1$
        and a sequence $\{(\vgg_n, h_n)\}_{n\in\IN}$ with $\|\vgg_n\|_{(L^2(\Omega))^2} + \|\beta_\eta^{-\nicefrac12} h_n\|_{L^2(\partial\Omega)} \to 0$ such that $p_n$ is solution of~\eqref{eq:aux_p:var} 
        where $\vgg_\eta$, $h_\eta$ and $\eta$ are replaced by $g_n$, $h_n$ and $\eta_n$. %
        
        Testing the variational formulation for $p_n$ with $q' = \overline{p_n}$ and taking the imaginary part, we find with the assumptions on $\alpha_\eta$ and $\beta_\eta$ 
        and the Cauchy-Schwarz inequality we find
        \begin{align}
        \label{eq:aux_p:stability:1}
        |\sqrt{\beta_{\eta_n}} p_n|_{H^1(\partial\Omega)}^2 \leq \|\vgg_n\|_{(L^2(\Omega))^2} |p_n|_{H^1(\Omega)} + \|\beta_\eta^{-\nicefrac12} h_n\|_{L^2(\partial\Omega)} |\sqrt{\beta_n} p_n|_{H^1(\partial\Omega)}
        \to 0 \quad \text{ for } n \to \infty\ .
        \end{align}
        
        Then, there exists a weakly converging subsequence, again called $\{p_n\}_{n\in\IN}$, whose limit $p$ for $n\to\infty$
        is the solution of the limit problem:
        \begin{align*}
        \int_\Omega \nabla p\cdot\nabla q' - \tfrac{\omega^2}{c^2} pq'\ \dx &= 0 \quad\forall q' \in H^1(\Omega)\ .
        \end{align*}
        By the assumption on $\omega$ it has as unique solution $p = 0$. Hence,
        \begin{align*}
        p_n \rightharpoonup 0 \quad \text{ in } H^1(\Omega)\ .
        \end{align*}    
        As $H^1(\Omega)$ is compactly embedded in $H^{\nicefrac{1}{2}}(\Omega)$ 
        we have the strong convergence
        \begin{align*}
        p_n \to 0 \quad \text{ in } H^{\nicefrac{1}{2}}(\Omega)
        \end{align*}
        and by the trace theorem
        \begin{align*}
        p_n \to 0 \quad \text{ in } L^2(\partial\Omega)\ .
        \end{align*}
        Finally, testing the variational formulation for $p_n$ with $q' = \overline{p_n}$ we find 
        \begin{align*}
        |p_n|_{H^1(\Omega)}^2 &\leq C \left(\|g_n\|_{L^2(\Omega)} \|p_n\|_{L^2(\Omega)} + 
        \|\beta_\eta^{-\nicefrac12}h_n\|_{L^2(\partial\Omega)} |\sqrt{\beta_\eta}p_n|_{H^1(\partial\Omega)} +  
        \tfrac{\omega^2}{c^2} \|p_n\|_{L^2(\Omega)}^2 + 
        |\sqrt{\beta_{\eta_n}} p_n|_{H^1(\partial\Omega)}^2\right) \to 0\ \text{ for } n \to \infty\ .
        \end{align*}
        This contradicts the assumption and, hence, we have uniqueness and with the Fredholm alternative existence. This completes the proof.
    \end{proof}

    \subsection{Well-posedness for a canonical approximative velocity system}
    
    In this section we analyse the well-posedness of a class of approximative velocity problems
    \begin{subequations}
        \label{eq:aux_v}
        \begin{align}
        \label{eq:aux_v:PDE}
        \nabla \big( \alpha_\eta \Div \vw_\eta\big) + 
        \tfrac{\omega^2}{c^2} \vw_\eta &= \vgg_\eta, \qquad \text{ in }\Omega, \\[0.2em]
        \label{eq:aux_v:BC}
        \vw_\eta\cdot\vn - \partial_t \big(\beta_\eta\partial_t \Div\vw_\eta\big)%
        &= \partial_t h_\eta,
        \quad \text{ on }\partial\Omega, 
        \end{align}
    \end{subequations}
    with $\alpha_\eta \in \IC \backslash \{0\}$, $\beta_\eta \in L^\infty(\partial\Omega)$,
    to which the approximative velocity systems~\eqref{eq:vappr:1}
    and~\eqref{eq:vappr:2} of order 1 or 2, respectively, belongs to. %
    If we indicate the respective functions for the system of order $N$ with a superscript we find that
    \begin{align*}
    \alpha_\eta^1 &= 1, & \alpha_\eta^2 &=  1 - \frac{\imag\omega(\eta + \eta')}{\rho_0c^2}, &
    %
    \beta_\eta^1 &= (1 + \imag)\frac{c^2}{\omega^2}\sqrt{\frac{\eta}{2\omega\rho_0}} &,
    \beta_\eta^2 &=  \beta_\eta^1 + \frac{c^2}{\omega^2}\frac{\imag\eta}{2\omega\rho_0}\kappa,\\
    %
    \vgg_\eta^1 &= \frac{\imag\omega}{\rho_0c^2}\vf, &
    \vgg_\eta^2 &= \vgg_\eta^1 + \frac{\eta}{\rho_0^2c^2}\plcurl\scurl\vf, &
    %
    h_\eta^1 &= \frac{\imag-1}{\omega\rho_0}\sqrt{\frac{\eta}{2\omega\rho_0}}\vf\cdot\vn^\bot, &
    h_\eta^2 &= \alpha^2_\eta\left(h_\eta^1 - \frac{\eta}{2\omega^2\rho_0^2}\kappa\vf\cdot\vn^\bot\right).
    \end{align*}
    
    With $\lambda_\eta = \alpha_\eta \Div\vw_\eta$ on $\partial\Omega$ the variational formulation for~\eqref{eq:aux_v} is given by: Seek $(\vw_\eta, \lambda_\eta) \in H(\Div,\Omega)\times H^1(\partial\Omega)$ such that
    \begin{subequations}
        \label{eq:aux_v:var}
        \begin{align}
        \label{eq:aux_v:var:1}
        \int_\Omega \alpha_\eta \Div\vw_\eta \Div \vv' - \frac{\omega^2}{c^2}\vw_\eta\cdot\vv'\dx - \int_{\partial\Omega} \lambda_\eta \vv'\cdot\vn\dS 
        &= -\int_\Omega \vgg_\eta\cdot\vv'\dx \qquad \forall \vv' \in H(\Div,\Omega)\\
        \label{eq:aux_v:var:2}
        \int_{\partial\Omega} \vw_\eta\cdot\vn \lambda' + \alpha_\eta^{-1} \beta_\eta \partial_t\lambda_\eta \partial_t \lambda' \dS &= -\int_{\partial\Omega} h_\eta\partial_t\lambda'\dS
        \quad \forall \lambda' \in H^1(\partial\Omega).
        \end{align}
    \end{subequations}
    The system~\eqref{eq:aux_v:var} is a saddle point problem with penalty term~\cite[Chap.~III, \S~4]{Braess:2007}. %
    Note, that we can consider~\eqref{eq:aux_v:var} in difference to~\eqref{eq:aux_v} with sources $h_\eta \in L^2(\partial\Omega)$, 
    
    \begin{lemma}[Well-posedness of the canonical approximative velocity system]
        \label{lem:aux_v}
        Let the assumption of Lemma~\ref{lem:aux_p} be fulfilled.
        Then, there exists a constant $\eta_m > 0$ such that for any $\eta \in (0,\eta_m)$  the system~\eqref{eq:aux_v:var}
        has a unique solution $\vw_\eta \in H(\Div,\Omega)$. 
        Furthermore, there exists a constant $C=C(\eta_m) > 0$ not depending on~$\eta$ such that
        \begin{align}
        \label{eq:aux_v:stability}
        \|\vw_\eta\|_{H(\Div,\Omega)} &\leq C\, \left(\|\vgg_\eta\|_{(L^2(\Omega))^2} 
        + \|\beta_\eta^{-\nicefrac12}h_\eta\|_{L^2(\partial\Omega)}
        \right)\ , &
        \|\scurl\vw_\eta\|_{L^2(\Omega)} &\leq C\, \|\scurl\vgg_\eta\|_{L^2(\Omega)}\ .
        \end{align}
    \end{lemma}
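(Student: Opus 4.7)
The plan is to prove the stability estimate by contradiction, mirroring the strategy already used for Lemma~\ref{lem:aux_p} but adapted to the mixed saddle-point structure of~\eqref{eq:aux_v:var}; existence and uniqueness then follow from the Fredholm alternative. The curl estimate is obtained separately by a direct algebraic argument.

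First I would suppose the stability estimate fails. Then there exist sequences $\eta_n \to 0$ and $(\vw_n,\lambda_n) \in H(\Div,\Omega)\times H^1(\partial\Omega)$ solving~\eqref{eq:aux_v:var} for data $(\vgg_n,h_n)$ with $\|\vgg_n\|_{(L^2(\Omega))^2} + \|\beta_{\eta_n}^{-\nicefrac12}h_n\|_{L^2(\partial\Omega)}\to 0$ and $\|\vw_n\|_{H(\Div,\Omega)}=1$. Testing~\eqref{eq:aux_v:var:1} with $\vv'=\overline{\vw_n}$ and subtracting~\eqref{eq:aux_v:var:2} tested with $\lambda'=\overline{\lambda_n}$ (so that the mixed boundary term $\int\vw_n\cdot\vn\overline{\lambda_n}$ cancels), then taking the imaginary part and using the sign hypotheses on $\Im\alpha_\eta$ and $\Im\beta_\eta$ inherited from Lemma~\ref{lem:aux_p}, I obtain
\[
|\sqrt{\alpha_{\eta_n}^{-1}\beta_{\eta_n}}\,\lambda_n|_{H^1(\partial\Omega)} \longrightarrow 0 \quad\text{as } n\to\infty .
\]

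The main obstacle I anticipate is that $H(\Div,\Omega)$ is not compactly embedded in $L^2(\Omega)$, so one cannot repeat the compactness step of Lemma~\ref{lem:aux_p} directly on $\vw_n$. I would circumvent this by introducing the auxiliary scalar $p_n := -\Div\vw_n \in L^2(\Omega)$: since $\alpha_{\eta_n}\in\IC$ is constant in $\Omega$, the PDE~\eqref{eq:aux_v:PDE} is equivalent to the pointwise algebraic identity
\[
\vw_n \;=\; \tfrac{c^2}{\omega^2}\bigl(\vgg_n + \alpha_{\eta_n}\nabla p_n\bigr),
\]
which shows that $\nabla p_n$ is bounded in $L^2(\Omega)^2$ and hence $p_n$ is bounded in $H^1(\Omega)$. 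Extracting a subsequence, $p_n \rightharpoonup p$ weakly in $H^1(\Omega)$ and strongly in $L^2(\Omega)$ and in $L^2(\partial\Omega)$ by compact embedding and the trace theorem. Passage to the limit in the PDE and in the boundary condition (using $\alpha_{\eta_n}\to 1$, $\beta_{\eta_n}\to 0$, and the vanishing of $\lambda_n$ in $H^1(\partial\Omega)$ in the weighted sense above) shows that $p$ solves the homogeneous Neumann problem $\Delta p + (\omega^2/c^2)p=0$ in $\Omega$, $\nabla p\cdot\vn = 0$ on $\partial\Omega$, hence $p=0$ by the eigenvalue assumption.

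Strong convergence $p_n\to 0$ in $L^2(\Omega)$ yields $\|\Div\vw_n\|_{L^2(\Omega)}\to 0$, and reinserting into~\eqref{eq:aux_v:var:1} tested against $\overline{\vw_n}$ together with the boundary and source bounds also forces $\|\vw_n\|_{L^2(\Omega)}\to 0$, contradicting $\|\vw_n\|_{H(\Div,\Omega)}=1$ and establishing~\eqref{eq:aux_v:stability}. Finally, the curl estimate is immediate: applying $\scurl$ to~\eqref{eq:aux_v:PDE} annihilates the gradient term $\nabla(\alpha_\eta\Div\vw_\eta)$, leaving $(\omega^2/c^2)\scurl\vw_\eta = \scurl\vgg_\eta$ pointwise, from which $\|\scurl\vw_\eta\|_{L^2(\Omega)} \leq (c^2/\omega^2)\|\scurl\vgg_\eta\|_{L^2(\Omega)}$ follows directly.
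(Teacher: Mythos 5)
Your uniqueness/stability argument takes a genuinely different route from the paper's. The paper is constructive: it performs an orthogonal Helmholtz decomposition $\vw_\eta=\vw_{\eta,0}+\nabla\psi_\eta$, identifies $\vw_{\eta,0}$ explicitly as $c^2/\omega^2$ times the $L^2$-projection of $\vgg_\eta$ onto $H_0(\Div0,\Omega)$, and reduces the gradient part to a scalar problem for $\phi_\eta=\Delta\psi_\eta$ of exactly the canonical-pressure type, so that Lemma~\ref{lem:aux_p} can be reused. You instead run the compactness/contradiction argument directly on the mixed formulation, and your key device --- passing to $p_n:=-\Div\vw_n$, which the pointwise identity $\vw_n=\tfrac{c^2}{\omega^2}(\vgg_n+\alpha_{\eta_n}\nabla p_n)$ shows to be bounded in $H^1(\Omega)$ --- correctly restores the Rellich compactness that $H(\Div,\Omega)$ lacks; your $p_n$ is, up to sign, the paper's scalar unknown $\phi_\eta$, so the two proofs meet at the same reduction from opposite ends. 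One bookkeeping caveat: the combination that kills the mixed term $\int_{\partial\Omega}\lambda_n\overline{\vw_n}\cdot\vn\,\dsurf$ under $\Im$ is the \emph{difference} of the two tested equations (the term becomes $-2\Re(\cdot)$), and the surviving quadratic terms are $\Im(\alpha_{\eta_n})\|\Div\vw_n\|_{L^2(\Omega)}^2$ and $-\int_{\partial\Omega}\Im(\alpha_{\eta_n}^{-1}\beta_{\eta_n})|\partial_t\lambda_n|^2\,\dsurf$; these carry the same sign for the actual coefficients $\beta_\eta^1,\beta_\eta^2$ of the velocity systems, but then the hypothesis $\Im\beta_\eta\le-c|\beta_\eta|$ quoted from Lemma~\ref{lem:aux_p} must be read with the opposite sign (the paper's own sign conventions are already inconsistent here), so this is repairable and not structural.

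The genuine gap is existence. The Fredholm alternative cannot simply be invoked for the mixed problem posed on $H(\Div,\Omega)\times H^1(\partial\Omega)$: the lower-order term $-\tfrac{\omega^2}{c^2}\int_\Omega\vw_\eta\cdot\vv'\,\dx$ is \emph{not} a compact perturbation there, for precisely the reason you yourself identify ($H(\Div,\Omega)$ does not embed compactly into $(L^2(\Omega))^2$), so the operator is not known to be Fredholm of index zero and your a priori estimate gives uniqueness but not existence. You need to construct the solution --- either via the paper's Helmholtz decomposition, or, staying within your own framework, by first solving the scalar problem of canonical-pressure type satisfied by $p$ (where Lemma~\ref{lem:aux_p} applies, since that problem lives in $H^1(\Omega)$ and compactness is available) and then checking that $\vw_\eta:=\tfrac{c^2}{\omega^2}(\vgg_\eta+\alpha_\eta\nabla p)$ together with $\lambda_\eta:=-\alpha_\eta p$ on $\partial\Omega$ satisfies~\eqref{eq:aux_v:var}. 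The $\scurl$ estimate is fine and coincides with the paper's.
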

    \begin{proof}  
        We start with the Helmholtz decomposition $\vw_\eta = \vw_{\eta,0} + \nabla\psi_\eta$ with $\vw_{\eta,0} \in H_0(\Div0,\Omega)$, $\psi_\eta \in H^1_\star(\Omega) := \{\psi \in H^1(\Omega), \int_\Omega \psi\dx = 0\}$.
        The decomposition is orthogonal since for all $(\vw_0,\psi)\in H_0(\Div0,\Omega) \times H^1(\Omega)$ it holds
        \begin{align*}
        \int_\Omega \nabla \psi \cdot \vw_0\, \dx &= - \int_\Omega \psi \Div\vw_0\,\dx + \int_{\partial\Omega}\psi \vw_0\cdot\vn = 0.
        \end{align*}
        Testing~\eqref{eq:aux_v:var:1} with $\vv' \in H_0(\Div0,\Omega)$ we find that $\vw_{\eta,0}$ is uniquely defined as $c^2/\omega^2$ times the $L^2(\Omega)$-projection of $\vgg_\eta$ onto $H_0(\Div0,\Omega)$.
        Hence, the estimates~\eqref{eq:aux_v:stability} hold for the component $\vw_{\eta,0}$. 
        
        Denoting $\phi_\eta = \Delta\psi_\eta$ where $\lambda_\eta = \alpha_\eta\phi_\eta$ on $\partial\Omega$ we find that it satisfies
        \begin{align*}
        \alpha_\eta\nabla \phi_\eta                 &= \vgg_\eta - \frac{\omega^2}{c^2} \left(\vw_{\eta,0} - \nabla \psi_\eta\right)\\
        \partial_t(\beta_\eta \partial_t \phi_\eta) &= \nabla\psi_\eta\cdot\vn - \partial_t h_\eta \ .
        \end{align*}
        The statement of the lemma is only fulfilled if $\psi_\eta \in H^1_\star(\Omega)$ and, hence, if $\phi_\eta \in H^1(\Omega) \cap H^1(\partial\Omega)$. 
        
        The following of the proof we derive conditions that $\psi_\eta$, $\lambda_\eta$ and $\phi_\eta$ need to fulfill, then define them by variational formulations and show that the defined quantities 
        satisfy the conditions.
        
        Testing~\eqref{eq:aux_v:var:1} with $\vv'=\nabla\psi'$ with $\psi' \in H^1_{\star,\Delta}(\Omega) := \{\phi \in H^1_\star(\Omega) : \Delta\phi \in L^2(\Omega)\}$ and using that $\vw_{\eta,0}\cdot\vn = 0$ on $\partial\Omega$ we find that $(\psi_\eta, \lambda_\eta)$ needs to solve
        \begin{align}
        \label{eq:aux_v:stability:1}
        \int_\Omega \alpha_\eta \Delta \psi_\eta \Delta \psi' - \frac{\omega^2}{c^2} \nabla \psi_\eta \cdot \nabla \psi' \,\dx - \int_{\partial\Omega} \lambda_\eta \nabla\psi'\cdot\vn \,\dsurf
        &= -\int_{\Omega} \vgg_\eta \cdot \nabla \psi'\,\dx \quad \forall \psi' \in H^1_{\star,\Delta}(\Omega)\ .
        \end{align}
        Now, inserting the definition of $\phi_\eta$ integration by parts we find that $\phi_\eta$ satisfies (in case of higher regularity of $h_\eta$)
        \begin{align}
        \nonumber
        \int_\Omega \vgg_\eta\cdot\nabla\psi'\,\dx &= %
        \int_\Omega -\alpha_\eta \phi_\eta\Delta\psi' - \frac{\omega^2}{c^2} \Delta\psi_\eta \psi'\,\dx + \int_{\partial\Omega}\lambda_\eta\nabla\psi'\cdot\vn + \frac{\omega^2}{c^2}\nabla\psi_\eta\cdot\vn\psi'\,\dsurf,\\
        \label{eq:aux_v:stability:2}
        &= \int_\Omega \alpha_\eta \nabla\phi_\eta\cdot\nabla\psi' - \frac{\omega^2}{c^2} \phi_\eta \psi'\,\dx + \frac{\omega^2}{c^2}\int_{\partial\Omega} \nabla\psi_\eta\cdot\vn\psi'\,\dsurf,
        \end{align}
        where we have used $\lambda_\eta = \alpha_\eta\phi_\eta$ on $\partial\Omega$ in the last step. %
        Inserting the decomposition of $\vw_\eta$ into~\eqref{eq:aux_v:var:2} and using~$\lambda_\eta = \alpha_\eta\phi_\eta$ on $\partial\Omega$ we find
        \begin{align}
        %
        \label{eq:aux_v:stability:3}
        \int_{\partial\Omega} \nabla\psi_\eta\cdot\vn \lambda' + \beta_\eta \partial_t\phi_\eta\partial_t\lambda' \,\dsurf
        &= -\int_{\partial\Omega} h_\eta \partial_t \lambda'\,\dsurf\ \quad\forall \lambda' \in H^1(\partial\Omega)
        \end{align}
        Subtracting $\omega^2/c^2$ times~\eqref{eq:aux_v:stability:3} from~\eqref{eq:aux_v:stability:2} for test functions $\phi' \in H^1_{\star,\Delta}(\Omega)\cap H^1(\partial\Omega)$
        we see that $\phi_\eta$ needs to satisfy
        \begin{align}
        \int_\Omega \alpha_\eta \nabla\phi_\eta\cdot\nabla\phi' - \frac{\omega^2}{c^2} \phi_\eta \phi'\,\dx 
        - \frac{\omega^2}{c^2}\int_{\partial\Omega}\beta_\eta \partial_t\phi_\eta\partial_t\phi' \,\dsurf
        &= \int_\Omega \vgg_\eta\cdot\nabla\phi'\,\dx + \frac{\omega^2}{c^2}\int_{\partial\Omega} h_\eta \partial_t \phi'\,\dsurf\ .
        \label{eq:aux_v:stability:4}
        \end{align}
        Considering~\eqref{eq:aux_v:stability:4} as variational formulation in $H^1_{\star}(\Omega) \cap H^1(\partial\Omega)$ and 
        following the lines of the proof of Lemma~\ref{lem:aux_p} we see that this formulation provides a unique solution $\phi_\eta\in H^1_{\star}(\Omega) \cap H^1(\partial\Omega)$
        with
        \begin{align*}
        \|\phi_\eta\|_{H^1(\Omega)} 
        \leq C \left(\|\vgg_\eta\|_{(L^2(\Omega))^2} + \|\beta_\eta^{-\nicefrac12} h_\eta\|_{L^2(\partial\Omega)}\right)\ .
        \end{align*}
        Inserting $\phi_\eta = \Delta\psi_\eta$ into~\eqref{eq:aux_v:stability:1} and using integration by parts 
        together with $\lambda_\eta = \alpha_\eta\phi_\eta$ on $\partial\Omega$ 
        we find that we $\psi_\eta$ can be defined uniquely by the variational formulation: Seek $\psi_\eta \in H^1_\star(\Omega)$ such that
        \begin{align}
        \int_\Omega \nabla\psi_\eta \cdot\nabla\psi'\,\dx &= \frac{c^2}{\omega^2}\int_\Omega \left(\vgg_\eta - \alpha_\eta \nabla\phi_\eta\right)\cdot\nabla\psi'\,\dx
        \quad\forall \psi' \in H^1_\star(\Omega)\ 
        \end{align}
        with
        \begin{align*}
        \|\psi_\eta\|_{H^1(\Omega)} \leq C \left(\|\vgg_\eta\|_{(L^2(\Omega))^2} + \|\beta_\eta^{-\nicefrac12} h_\eta\|_{L^2(\partial\Omega)}\right)\ .
        \end{align*}
        As $H^1_{\star,\Delta}(\Omega) \subset H^1_{\star}(\Omega)$ it follows that $\psi_\eta$ fulfills~\eqref{eq:aux_v:stability:1} as well and, hence,~\eqref{eq:aux_v:stability:3}.
        
        Hence, $\vw_\eta = \vw_{\eta,0} + \nabla\psi_\eta$ fulfills~\eqref{eq:aux_v:var} and the first estimate of the lemma. %
        Finally, taking test functions $\vv' = \plcurl\psi'$ with $\psi'\in H^1(\Omega)$ we find that $\scurl\vw_\eta = \frac{c^2}{\omega^2}\scurl\vgg_\eta$ and so the second estimate.
        This completes the proof.

    \end{proof}

    \subsection{Well-posedness and equivalence of approximative models for pressure and velocity}
    
    With the well-posedness of the canonical approximative models for pressure and velocity we are position to prove the well-posednes as well as the equivalence of the 
    approximative models.
    
    \begin{proof}[Proof of Theorem~\ref{lem:stability}] 
        The well-posedness of the approximative models~\eqref{eq:pappr:0} and~\eqref{eq:vappr:0} of order~0 were proven in~\cite{Schmidt.Thoens.Joly:2014}. %
        
        The well-posedness of the approximative models~\eqref{eq:pappr:1},~\eqref{eq:pappr:2} for $\apprA{p}{N}$, $N=1,2$
        and~\eqref{eq:vappr:1},~\eqref{eq:vappr:2} for $\apprA{\vv}{N}$, $N=1,2$ follows from Lemma~\ref{lem:aux_p} and Lemma~\ref{lem:aux_v},
        where the assumption on the smoothness of the boundary $\partial\Omega$ guarantees that the curvature $\kappa \in L^\infty(\partial\Omega)$.
        
        It remains to show that $\apprA{p}{N}$ defined by~\eqref{eq:vN1:pN} and with~\eqref{eq:pappr:0}--\eqref{eq:pappr:2} are equivalent
        as well as $\apprA{\vv}{N}$ defined by~\eqref{eq:pN1:vN} and with~\eqref{eq:vappr:0}--\eqref{eq:vappr:2}.
        
        With $\vf\in L^2(\Omega)$ it follows that $\Div\apprA{\vv}{N} \in H^1(\Omega)$ and $\apprA{p}{N} := -\frac{\imag\rho_0c^2}{\omega}\Div\apprA{\vv}{N} \in H^1(\Omega)$ fulfills~\eqref{eq:pappr:0}--\eqref{eq:pappr:2}
        -- due to the derivation of~\eqref{eq:pappr:0}--\eqref{eq:pappr:2} in Sec.~\ref{sec:ImpedanceBC:Derivation:p}. Hence, $\apprA{p}{N}$ defined by~\eqref{eq:vN1:pN} and with~\eqref{eq:pappr:0}--\eqref{eq:pappr:2} are equivalent.%
        
        Moreover, applying the divergence to~\eqref{eq:pN1:vN} and inserting~\eqref{eq:pappr:0}--\eqref{eq:pappr:2} we find
        \begin{align*}
        \Div \apprA{\vv}{N} %
        &= \frac{\imag}{\rho_0\omega}\Div\vf - \frac{\imag}{\rho_0\omega} \Big(1- \frac{\imag\omega(\eta+\eta')\delta_{N=2}}{\rho_0 c^2}\Big) \Delta \apprA{p}{2} 
        = \frac{\imag\omega}{\rho_0 c^2} \apprA{p}{N} \in H^1(\Omega).
        \end{align*}
        Then, applying $\nabla$, multiplying with $(1- \frac{\imag\omega(\eta+\eta')\delta_{N=2}}{\rho_0 c^2})$ and using~\eqref{eq:pN1:vN} we obtain
        \begin{align*}
        \Big(1- \frac{\imag\omega(\eta+\eta')\delta_{N=2}}{\rho_0 c^2}\Big) \nabla \Div \apprA{\vv}{N} %
        &= \frac{\imag\omega}{\rho_0 c^2} \Big(1- \frac{\imag\omega(\eta+\eta')\delta_{N=2}}{\rho_0 c^2}\Big) \nabla \apprA{p}{N}\\
        &= \frac{\omega^2}{c^2} \left(-\apprA{\vv}{N} + \frac{\imag}{\rho_0\omega}\vf + \frac{\eta}{\rho_0^2\omega^2}\delta_{N=2}\plcurl\scurl\vf\right)
        \end{align*}
        which is~\eqref{eq:vappr:0:PDE}, \eqref{eq:vappr:1:PDE}, or~\eqref{eq:vappr:2:PDE}, respectively.
        
        Taking the normal trace of~\eqref{eq:pN1:vN} on $\partial\Omega$ and inserting~\eqref{eq:pappr:0:bc}, \eqref{eq:pappr:1:bc}, or~\eqref{eq:pappr:2:bc}, respectively, we find 
        \begin{align*}
        \apprA{\vv}{N}\cdot\vn &= 
        \frac{\imag}{\rho_0\omega} \left(\vf\cdot\vn - \Big(1- \frac{\imag\omega(\eta+\eta')\delta_{N=2}}{\rho_0 c^2}\Big) \nabla \apprA{p}{N} \cdot\vn\right)
        + \frac{\eta}{\rho_0^2\omega^2}\delta_{N=2}\plcurl\scurl\vf\cdot\vn\\
        &= \frac{\imag}{\rho_0\omega} \left((1+\imag)\sqrt{\frac{\eta}{2\omega\rho_0}}
        \delta_{N\geq1} \left(\partial_t^2 \apprA{p}{N} + \partial_t(\vf\cdot\vn^\bot)\right)
        + \frac{\imag\eta}{2\omega\rho_0} \delta_{N=2} \left(\partial_t(\kappa\partial_t \apprA{p}{N}) + \partial_t(\kappa \vf\cdot\vn^\bot)\right) 
        \right)\in H^{-\nicefrac{1}{2}}(\partial\Omega)\ ,
        \end{align*}
        since with~\eqref{eq:pappr:1:bc} we have $\partial_t^2 \apprA{p}{1} \in H^{-\nicefrac{1}{2}}(\partial\Omega)$
        and with~\eqref{eq:pappr:2:bc} it follows $\partial_t^2 \apprA{p}{2} + \frac{1 + \imag}{2} \sqrt{\frac{\eta}{2\omega\rho_0}}\partial_t(\kappa\partial_t \apprA{p}{N}) \in H^{-\nicefrac{1}{2}}(\partial\Omega)$. %
        Now, taking the trace of~\eqref{eq:pN1:vN} on $\partial\Omega$ and inserting in the previous identity we find
        \begin{align*}
        \apprA{\vv}{N}\cdot\vn &= 
        (1+\imag)\sqrt{\frac{\eta}{2\omega\rho_0}}\delta_{N\geq1}
        \left( \frac{\omega^2}{c^2} \partial_t^2 \Div\apprA{\vv}{N} 
        + \frac{\imag}{\rho_0\omega}\partial_t(\vf\cdot\vn^\bot)\right)
        + \frac{\imag\eta}{2\omega\rho_0}\delta_{N=2}
        \left( \frac{\omega^2}{c^2}\partial_t(\kappa\partial_t \Div\apprA{\vv}{N})
        + \frac{\imag}{\rho_0\omega}\partial_t(\kappa \vf\cdot\vn^\bot)
        \right)\ ,
        \end{align*}
        which is~\eqref{eq:vappr:0:bc}, \eqref{eq:vappr:1:bc}, or~\eqref{eq:vappr:2:bc}, respectively. 
        Hence, $\apprA{\vv}{N}$ defined by~\eqref{eq:pN1:vN} and with~\eqref{eq:vappr:0}--\eqref{eq:vappr:2} are equivalent.%
        
        This finishes the proof.
    \end{proof}

    \subsection{Modelling error of the approximative models}
    \label{sec:justification:asympExactness}
    
    In this section we show in Lemma~\ref{lem:error:farfield} that the approximative solutions of order $N$ are asympotically close to the asymptotic far field expansions of the exact solution. 
    For this we need some higher regularity of the terms of the asymptotic expansion (Lemma~\ref{lem:vj:regularity}).
    As the asymptotic expansions are justified the estimates for the modelling error follow immediately.
    

    
    \begin{lemma}
        \label{lem:vj:regularity}
        Let the assumptions of Theorem~\ref{lem:error} be fulfilled.
        Then, there exists a neighbourhood $\Omega_\Gamma$ of $\partial\Omega$
        such that
        for any $j \in\IN_0$ and any $m \in \IN_0$ it holds $\Div\vv^j \in
        H^2(\Omega)\cap H^{m}(\Omega_\Gamma)$.
    \end{lemma}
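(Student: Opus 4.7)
The plan is induction on $j \in \IN_0$. The starting point is to observe that, using $p^j = -\frac{\imag\rho_0 c^2}{\omega}\Div\vv^j$, taking the divergence of the vector PDE in~\eqref{eq:vj:wave} and its normal trace on $\partial\Omega$ turns the system for the far-field term into a Neumann problem for the Helmholtz operator,
\begin{equation*}
\Delta p^j + \tfrac{\omega^2}{c^2}\,p^j \;=\; F_j \quad \text{in }\Omega,\qquad \partial_{\vn} p^j \;=\; G_j \quad \text{on }\partial\Omega,
\end{equation*}
where $F_j$, $G_j$ depend only on $\vf$ (through $M_j$ and $H_j$ and $\scurl$-powers of $\vf$) and on the lower-order far-field terms $\vv^{j-\ell}$, $\ell\ge1$ (through the tangential operators $G_\ell$, $L_\ell$ and, for $j\ge 2$, through $\Delta\vv^{j-2}$ and $\nabla\Div\vv^{j-2}$).

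For the global bound $\Div\vv^j\in H^2(\Omega)$, I would invoke standard Neumann regularity for the Helmholtz operator on the smooth domain $\Omega$: since $\omega^2/c^2$ is not a Neumann eigenvalue, the map $(F_j,G_j)\mapsto p^j$ is continuous from $L^2(\Omega)\times H^{1/2}(\partial\Omega)$ into $H^2(\Omega)$. By the induction hypothesis $\Div\vv^{j-\ell}\in H^2(\Omega)$, and combining the recursion for $\scurl\vv^{j-\ell}$ (closed form in terms of $\scurl^r\vf$, cf.~(2.11) of~\cite{Schmidt.Thoens.Joly:2014}) with a Helmholtz decomposition of $\vv^{j-\ell}$, one retrieves the regularity of $\vv^{j-\ell}$ itself that is needed to put $F_j$ in $L^2$. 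The boundary data $G_j$ are made up of tangential derivatives of traces of such $\vv^{j-\ell}$ and of $\vf$, which are smooth on $\partial\Omega$ thanks to the assumption $\vf\in H^m(\wt\Omega)$.

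The near-boundary bound $\Div\vv^j\in H^m(\Omega_\Gamma)$ is then obtained by a standard boundary boot-strap on a tubular neighbourhood $\Omega_\Gamma\Subset\wt\Omega$. Since on $\wt\Omega$ the source $\vf$ and its curl are $C^\infty$, the induction hypothesis gives $\vv^{j-\ell}\in H^m(\Omega_\Gamma)$ for all $m$ (again via the Helmholtz decomposition plus smoothness of $\scurl\vv^{j-\ell}$). Consequently $F_j$ is $C^\infty$ on $\Omega_\Gamma$ and $G_j$ is $C^\infty$ on $\partial\Omega$. Tangential difference quotients applied to the Helmholtz equation raise tangential regularity of $p^j$ freely, and the equation itself trades tangential for normal regularity, giving $p^j\in H^m(\Omega_\Gamma)$ for every $m$, after possibly shrinking $\Omega_\Gamma$ at each iteration and then fixing a single $\Omega_\Gamma\Subset\wt\Omega$.

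The main obstacle is handling the terms $\Delta\vv^{j-2}$ and $\nabla\Div\vv^{j-2}$ appearing in the right-hand side of~\eqref{eq:vj:wave} for $j\ge 2$: controlling $\Delta\vv^{j-2}$ in $L^2(\Omega)$ requires $\vv^{j-2}\in H^2(\Omega)$ globally, while the induction statement only records $H^2$ regularity of $\Div\vv^{j-2}$. The gap is bridged by writing $\vv^{j-2}=\vv^{j-2}_0+\nabla\psi^{j-2}$ with $\vv^{j-2}_0\in H_0(\Div 0,\Omega)$: the rotational part has curl equal to $\scurl\vv^{j-2}$, which is smooth by the aforementioned closed-form expression in $\scurl\vf$, hence $\vv^{j-2}_0\in H^2(\Omega)$ by div-curl regularity on the smooth $\Omega$; the gradient part satisfies $\Delta\psi^{j-2}=\Div\vv^{j-2}\in H^2(\Omega)$, whence $\nabla\psi^{j-2}\in (H^2(\Omega))^2$ by Neumann regularity of the Laplacian. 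Assembling the two parts closes the induction.
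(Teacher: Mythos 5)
Your argument is sound in substance but takes a genuinely different, and considerably heavier, route than the paper. The paper solves no elliptic problem at all: it imports the regularity of the far-field terms themselves from the earlier work \cite{Schmidt.Thoens.Joly:2014} (Lemma~2.3 there gives $\vv^j\in(H^1(\Omega))^2$ globally, Lemma~4.6 gives arbitrary Sobolev regularity of $\vv^j$ near $\partial\Omega$), rewrites $\Delta\vv^{j-2}=\nabla\Div\vv^{j-2}-\plcurl\scurl\vv^{j-2}$ in~\eqref{eq:vj:wave}, replaces $\plcurl\scurl\vv^{j-2}$ by its closed form $\bigl(-\tfrac{\imag}{2}\plcurl\scurl\bigr)^{j/2}\vf$ from (2.11) of that reference, and then simply reads the claimed regularity of $\nabla\Div\vv^j$ off the resulting algebraic identity, inducting in $j$ only to absorb the remaining $\nabla\Div\vv^{j-2}$ term. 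Your Neumann--Helmholtz formulation for $p^j$ with global $H^2$ elliptic regularity plus a tangential bootstrap near $\partial\Omega$ re-derives from scratch what the paper cites, and your Helmholtz decomposition $\vv^{j-2}=\vv^{j-2}_0+\nabla\psi^{j-2}$ is a valid, if roundabout, substitute for the paper's direct splitting of $\Delta\vv^{j-2}$: both devices rest on exactly the same two facts, the closed form of $\scurl\vv^{j-2}$ in terms of $\scurl\vf$ and the induction hypothesis on $\Div\vv^{j-2}$. What your version buys is self-containedness (no appeal to the prior work for the near-boundary smoothness of $\vv^j$); what it costs is the extra elliptic machinery and the care needed in the div--curl and bootstrap steps. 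One caveat applies to both proofs equally: for the base case $j=0$ the right-hand side contains $\vf$ itself, and the stated hypothesis $\vf\in(L^2(\Omega))^2$ with smooth curl only yields $\Div\vf\in H^{-1}(\Omega)$ globally, so neither your elliptic-regularity step nor the paper's identity delivers the global $H^2(\Omega)$ bound without tacitly using more interior regularity of $\vf$; this weakness is inherited from the statement rather than introduced by you.
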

    \begin{proof}
        By Lemma 2.3 in~\cite{Schmidt.Thoens.Joly:2014} all terms $\vv^j
        \in (H^1(\Omega))^2$ and by Lemma~4.6
        in~\cite{Schmidt.Thoens.Joly:2014} the terms $\vv^j$ have any
        Sobolev regularity in any subdomain of $\Omega_\Gamma$ of
        $\wt{\Omega}$. %
        Using~\eqref{eq:vj:wave} and (2.11)
        in~\cite{Schmidt.Thoens.Joly:2014} {we find by induction in $j$}
        \begin{align*}
        \nabla\Div\vv^j &= %
        -\frac{\omega^2}{c^2}\vv^j %
        - \frac{\imag\omega}{\rho_0c^2}\vf\cdot\delta_{j=0} +
        \frac{\imag(1+\gamma')\omega^2}{2c^2}\nabla\Div\vv^{j-2}
        - \frac{\imag\omega^2}{2c^2}\plcurl\scurl\vv^{j-2}\\
        &= -\frac{\omega^2}{c^2}\vv^j - \delta_{j \text{ is
                even}}\frac{\imag\omega}{\rho_0c^2}\left(-\frac{\imag}{2}\plcurl\scurl\right)^{j/2}\vf
        +
        \frac{\imag(1+\gamma')\omega^2}{2 c^2}\nabla\Div\vv^{j-2}
        \in (H^1(\Omega))^2 \cap (H^{m-1}(\Omega_\Gamma))^2,
        \end{align*}
        and so the statement of the lemma.
    \end{proof}
    
    \begin{lemma}
        \label{lem:error:farfield}
        Let the assumptions of Theorem~\ref{lem:error} be fulfilled.
        Then, it holds for the solution $\apprA{\vv}{N}$ of the approximative models~\eqref{eq:vappr:1} and~\eqref{eq:vappr:2} for $N=1,2$, respectively, 
        that 
        $\scurl \apprA{\vv}{N} - \sum_{j=0}^N (\frac{2\eta}{\omega\rho_0})^{\frac{j}{2}}\scurl \vv^j = 0$
        and, 
        there exist constants $\eta_0$ and
        $C$ independent of $\eta$ such that for 
        $\apprA{\vv}{N}$ and for $\apprA{p}{N}$ for $N=1,2$ given by~\eqref{eq:vN1:pN}
        and any
        $\eta \in (0,\eta_0)$ it holds
        \begin{align}
        \Big\| \apprA{\vv}{N} - \sum_{j=0}^N
        \bigg(\frac{2\eta}{\omega\rho_0}\bigg)^{\frac{j}{2}}\vv^j\Big\|_{H(\Div,\Omega)} +
        \Big\|\apprA{p}{N} - \sum_{j=0}^N
        \bigg(\frac{2\eta}{\omega\rho_0}\bigg)^{\frac{j}{2}}p^j\Big\|_{H^1(\Omega)}
        &\leq C \eta^{\frac{N+1}{2}}\ .
        \label{eq:error:farfield}
        \end{align}
    \end{lemma}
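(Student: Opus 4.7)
My plan is to reduce the statement to the stability of the canonical approximative velocity system in Lemma~\ref{lem:aux_v}. Let $\eps = \sqrt{2\eta/(\omega\rho_0)}$ and write the truncated far field
$\vv^{\eps,N} := \sum_{j=0}^N \eps^j\vv^j$ and $p^{\eps,N} := \sum_{j=0}^N \eps^j p^j$.
Define the errors $\ve_N := \apprA{\vv}{N} - \vv^{\eps,N}$ and $e_N := \apprA{p}{N} - p^{\eps,N}$. The key observation is that the derivation in Sec.~\ref{sec:ImpedanceBC:Derivation:v} already shows that $\vv^{\eps,N}$ satisfies the same PDE~\eqref{eq:vappr:geq2:PDE} and the same impedance boundary condition~\eqref{eq:vappr:N:bc} modulo residual terms $\vR_N^{\mathrm{vol}}$ (in $\Omega$) and $R_N^{\mathrm{bdry}}$ (on $\partial\Omega$) that collect precisely the neglected $\eps^{N+1},\eps^{N+2},\ldots$ contributions written out explicitly in the truncations performed in that section.

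The first step is to compute $\vR_N^{\mathrm{vol}}$ and $R_N^{\mathrm{bdry}}$ explicitly. Each term in $\vR_N^{\mathrm{vol}}$ is of the form $\eps^{N+k}$ (for some $k\geq 1$) times a fixed differential operator applied to some $\vv^j$, $0\leq j\leq N$, or to $\vf$; similarly $R_N^{\mathrm{bdry}}$ is a sum of $\eps^{N+k}\,G_\ell(\partial_t\Div\vv^j)$. By Lemma~\ref{lem:vj:regularity} all $\vv^j$ have the required Sobolev regularity in $\Omega$ and arbitrarily high regularity in a neighbourhood of $\partial\Omega$, so the relevant $L^2(\Omega)$ norm of $\vR_N^{\mathrm{vol}}$ and the weighted $L^2(\partial\Omega)$ norm $\|\beta_\eta^{-1/2}R_N^{\mathrm{bdry}}\|_{L^2(\partial\Omega)}$ (noting $|\beta_\eta|\sim\eps$) are both $\leq C\eps^{N+1}$. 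This is the place where the assumptions on $\vf$ in Theorem~\ref{lem:error} and the trace/regularity results from~\cite{Schmidt.Thoens.Joly:2014} are really used.

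Next, $\ve_N$ solves the canonical velocity system~\eqref{eq:aux_v} (in its weak form~\eqref{eq:aux_v:var}) with coefficients $\alpha_\eta^N,\beta_\eta^N$ as tabulated above Lemma~\ref{lem:aux_v} and with new source data $\vR_N^{\mathrm{vol}}$ and $R_N^{\mathrm{bdry}}$ in place of $\vgg_\eta,h_\eta$. The hypotheses on $\alpha_\eta,\beta_\eta$ are satisfied, so Lemma~\ref{lem:aux_v} yields
\begin{align*}
\|\ve_N\|_{H(\Div,\Omega)} \leq C\bigl(\|\vR_N^{\mathrm{vol}}\|_{L^2(\Omega)} + \|(\beta_\eta^N)^{-1/2}R_N^{\mathrm{bdry}}\|_{L^2(\partial\Omega)}\bigr) \leq C\eps^{N+1}.
\end{align*}
The curl identity is obtained by taking $\vv' = \plcurl\psi'$ in~\eqref{eq:aux_v:var:1}: both $\scurl\apprA{\vv}{N}$ and $\scurl\vv^{\eps,N}$ equal $(c^2/\omega^2)\scurl\vgg_\eta^N$ (with the same $\vgg_\eta^N$), so their difference vanishes exactly, as claimed, and moreover $\scurl\vR_N^{\mathrm{vol}}=0$. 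The pressure estimate follows from the algebraic relation $\apprA{p}{N}=-\tfrac{\imag\rho_0c^2}{\omega}\Div\apprA{\vv}{N}$ (and the analogous one for the $p^j$), which converts the $\|\Div\,\cdot\,\|_{L^2(\Omega)}$ bound into a bound on $\|\nabla e_N\|_{L^2(\Omega)}$ together with an $L^2(\Omega)$ bound that is inherited from $\|\ve_N\|_{H(\Div,\Omega)}$ and the Helmholtz equation satisfied by $e_N$.

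The main technical obstacle is bookkeeping: keeping track of which residual contributions go into $\vR_N^{\mathrm{vol}}$ versus $R_N^{\mathrm{bdry}}$ and verifying the right $\eps$-weight in each norm. In particular, the factor $(\beta_\eta^N)^{-1/2}\sim\eps^{-1/2}$ on the boundary means that $R_N^{\mathrm{bdry}}$ must be controlled in $L^2(\partial\Omega)$ with a size $O(\eps^{N+3/2})$, which is exactly what falls out of the derivation in Sec.~\ref{sec:ImpedanceBC:Derivation:v} once one uses that the operators $G_\ell$ carry a factor $\eps^\ell$ and that the traces $\partial_t\Div\vv^j$ are uniformly bounded thanks to Lemma~\ref{lem:vj:regularity}.
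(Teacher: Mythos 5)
Your overall strategy --- recast the difference between $\apprA{\vv}{N}$ and the truncated far field $\vv^{\eps,N}=\sum_{j=0}^N\eps^j\vv^j$ as a solution of the canonical velocity system~\eqref{eq:aux_v} with residual data, and invoke the stability of Lemma~\ref{lem:aux_v} --- is exactly the paper's strategy, and your treatment of the curl identity and of the pressure bound is fine. But there is a genuine gap in the quantitative step, and it sits precisely where you wave at the ``bookkeeping.'' The boundary residual produced by the truncation in Sec.~\ref{sec:ImpedanceBC:Derivation:v} is
\begin{align*}
R_N^{\mathrm{bdry}} \;=\; -\sum_{\ell=1}^{N}\;\sum_{k=N-\ell+1}^{N}\eps^{\,k+\ell}\,G_\ell(\partial_t\Div\vv^{k}),
\end{align*}
whose leading contributions have $k+\ell=N+1$ exactly; since the $\vv^k$ do not vanish on $\partial\Omega$ in general, this residual is genuinely of size $O(\eps^{N+1})$ in $L^2(\partial\Omega)$, \emph{not} $O(\eps^{N+3/2})$ as you assert (the powers of $\eps$ here come in integer steps, so there is nothing from which an extra $\eps^{1/2}$ could ``fall out''). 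Consequently the norm that Lemma~\ref{lem:aux_v} actually controls, $\|\beta_\eta^{-1/2}R_N^{\mathrm{bdry}}\|_{L^2(\partial\Omega)}$ with $|\beta_\eta|\sim\eps$, is only $O(\eps^{N+1/2})$, and your direct application of the stability estimate yields $\|\ve_N\|_{H(\Div,\Omega)}\leq C\eta^{(2N+1)/4}$ --- half an order short of the claimed $\eta^{(N+1)/2}$.

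The paper closes this gap by pushing the expansion one order further: it defines additional terms $\vv^{N,j}$ for $j\geq N+1$ through the recursion~\eqref{eq:v_N_j} (establishing their well-posedness and regularity by induction, using Lemma~\ref{lem:vj:regularity} and the regularity results of~\cite{Schmidt.Thoens.Joly:2014}), applies the stability lemma to the \emph{higher} remainder $\delta\apprA{\vv}{N,N+1}$ --- for which the same half-order loss still leaves $O(\eps^{N+3/2})$ --- and then recovers the optimal rate for $\delta\apprA{\vv}{N,N}$ by the triangle inequality, since the term $\eps^{N+1}\vv^{N,N+1}$ that is added back is itself $O(\eps^{N+1})$ in $H(\Div,\Omega)$. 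Your proof needs this (or an equivalent device, such as a sharper boundary stability estimate exploiting extra regularity of $h_\eta$, which Lemma~\ref{lem:aux_v} as stated does not provide) to reach the rate claimed in~\eqref{eq:error:farfield}.
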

    
    \begin{proof}
        Comparing the governing equations for $\apprA{\vv}{N}$ with those for $\vv^j$, \ie,~\eqref{eq:vappr:N:bc} with~\eqref{eq:vj:bc} and~\eqref{eq:vappr:1:PDE},~\eqref{eq:vappr:geq2:PDE} with~\eqref{eq:vj:wave:simplified}
        we claim that $\apprA{\vv}{N}$ has an asymptotic expansion in the form
        \begin{align}
        \apprA{\vv}{N} \approx
        \vv^{N,0}+\sqrt{\tfrac{2\eta}{\omega\rho_0}}\vv^{N,1} +
        \tfrac{2\eta}{\omega\rho_0} \vv^{N,2} +
        \left(\tfrac{2\eta}{\omega\rho_0}\right)^{\nicefrac{3}{2}} \vv^{N,3} + \ldots,
        \label{eq:v:decomposition}
        \end{align}
        where $\vv^{N,j} := \vv^j$ for $j = 0,1,\ldots,N$. To justify this asymptotic expansion we call
        \begin{align}
        \label{eq:delta_v_N_n}
        \delta\apprA{\vv}{N,n} = \apprA{\vv}{N} - \sum_{j=0}^n \left(\tfrac{2\eta}{\omega\rho_0}\right)^{\nicefrac{j}{2}}\vv^{N,j},
        \end{align}
        the remainder of order $N$ and estimate it in norm in powers of $\sqrt{\eta}$.
        
        First, the terms $\vv^{N,j}$, $j \geq N+1$ satisfy
        \begin{align}
        \begin{aligned}
        \nabla\Div \vv^{N,j} + \frac{\omega^2}{c^2} \vv^{N,j} &= 
        \delta_{N\geq2}\frac{\imag(1+\gamma')\omega^2}{2c^2}\nabla\Div \vv^{N,j-2} =: \vf^{N,j}
        , && \text{in } \Omega,\\ 
        \vv^{N,j}\cdot \vn &= \sum_{\ell=1}^N G_\ell(\partial_t\Div\vv^{N,j-\ell}) =: g^{N,j},  && \text{on } \partial\Omega\ .
        \end{aligned}
        \label{eq:v_N_j}
        \end{align}
        For $j = N+1$ there are only terms of $\vv^{N,j} = \vv^j$, $j \leq N$ on the right hand side. For those terms by Lemma~\ref{lem:vj:regularity} 
        and the trace theorem we have $\Div\vv^{N,j} \in H^2(\Omega)$ and $\Div\vv^{N,j} \in H^{m+\nicefrac{1}{2}}(\partial\Omega)$ for any $m \in \IN$
        and any $j \neq N$. %
        Hence, for the right hand sides for $j=N+1$ we have the regularity $\vf^{N,j} \in (L^2(\Omega))^2$ and $g^{N,j} \in H^{m+\nicefrac{1}{2}}(\partial\Omega)$ for any $m \in \IN$. %
        By Lemma~2.3 in \cite{Schmidt.Thoens.Joly:2014} $\vv^{N,j}$, $j=N+1$ is well defined and by Lemma~4.6 in~\cite{Schmidt.Thoens.Joly:2014} 
        it has the regularity $\Div\vv^{N,j} \in H^2(\Omega)$ and $\Div\vv^{N,j} \in H^{m+\nicefrac{1}{2}}(\partial\Omega)$ for any $m \in \IN$.
        As the right hand side for $j=N+2$ consists only of terms $\vv^{N,j}$, $j \leq N+1$ it has the same regularity as $j=N+1$.
        Now, by induction in $j$ all terms $\vv^{N,j}$ are well defined and $\Div\vv^{N,j} \in H^2(\Omega)$ and $\Div\vv^{N,j} \in H^{m+\nicefrac{1}{2}}(\partial\Omega)$ for any $m \in \IN$.
        
        Inserting the decomposition~\eqref{eq:v:decomposition} of $\apprA{\vv}{N}$ in their governing equations~\eqref{eq:vappr:1:PDE},~\eqref{eq:vappr:geq2:PDE} 
        and~\eqref{eq:vappr:N:bc} and using the governing equations for $\vv^j$ and $\vv^{N,j}$ we find that the remainder $\delta\apprA{\vv}{N,n}$ fulfills
        \begin{align}
        \label{eq:delta_vv_N_n:problem}
        \begin{aligned}
        \left(1 - \frac{\imag\omega(\eta+\eta')\delta_{N\geq2}}{\rho_0c^2}\right)\nabla\Div\delta\apprA{\vv}{N,n}
        + \frac{\omega^2}{c^2} \delta\apprA{\vv}{N,n} 
        &= \frac{\imag(1+\gamma')\omega^2}{2c^2}\delta_{N\geq2} 
        \sum_{j=n+1}^{n+2} \left(\frac{2\eta}{\omega\rho_0}\right)^{j/2} \nabla\Div \vv^{N,j-2},
        &&\text{ in }\Omega\\
        \delta\apprA{\vv}{N,n}\cdot\vn - \sum_{\ell=1}^N \left(\frac{2\eta}{\omega\rho_0}\right)^{j/2}  G_\ell(\partial_t\Div \delta\apprA{\vv}{N,n})
        &= - \sum_{j=n+1}^{n+N} \left(\frac{2\eta}{\omega\rho_0}\right)^{j/2} \sum_{\ell=j-n}^{N} G_\ell(\partial_t\Div\vv^{N,j-\ell}),
        &&\text{ on }\partial\Omega\ .
        \end{aligned}
        \end{align}
        The problem~\eqref{eq:delta_vv_N_n:problem} for the remainder for $N=1,2$ belongs to the canonical approximative velocity problem~\eqref{eq:aux_v} and with Lemma~\ref{lem:aux_v} we find
        for all $n \geq N$ that $\scurl\delta\apprA{\vv}{N,n} = 0$ and
        \begin{align}
        \label{eq:delta_vv_N_n:estimate}
        \|\delta\apprA{\vv}{N,n}\|_{H(\Div,\Omega)} &\leq C \eta^{\nicefrac{(2n+1)}{4}}\ .
        \end{align}
        Finally, for $n = N+1$ we obtain 
        \begin{align*}
        \|\delta\apprA{\vv}{N,N}\|_{H(\Div,\Omega)} &\leq \left(\frac{2\eta}{\omega\rho_0}\right)^{\frac{N+1}{2}} \|\vv^{N,N+1}\|_{H(\Div,\Omega)} + \|\delta\apprA{\vv}{N,N+1}\|_{H(\Div,\Omega)}
        \end{align*}
        and with~\eqref{eq:delta_vv_N_n:estimate} the bounds for the velocity follows. %
        Moreover, with the definition~\eqref{eq:vN1:pN} of the pressure approximation 
        and the definition~\eqref{eq:vj:p} of the terms of the asymptotic expansion of the pressure
        the same bound follows for the $L^2(\Omega)$-norm of the pressure.
        Finally, the $H^1(\Omega)$-bound follows from the equations~\eqref{eq:pappr:1} and~\eqref{eq:pappr:2} for the approximative pressure and 
        and respective equations for the terms $p^j$ of the asymptotic pressure expansion that is derived using~\eqref{eq:vj:wave:simplified} and~\eqref{eq:vj:p}. %
        That finishes the proof.
    \end{proof}
    
    Now, we are in the position to prove the 
    estimates on
    the modelling error for the approximative solutions.

    \begin{proof}[Proof of Theorem~\ref{lem:error}]
        The estimates~\eqref{lem:error} 
        follow immediately from Lemma~2.2 of~\cite{Schmidt.Thoens.Joly:2014}, Lemma~\ref{lem:error:farfield},
        and the triangle inequality. %
    \end{proof}

    
    \begin{figure}[tb]
        \centering
        \parbox[t]{2.5cm}{\includegraphics[width=2.5cm]{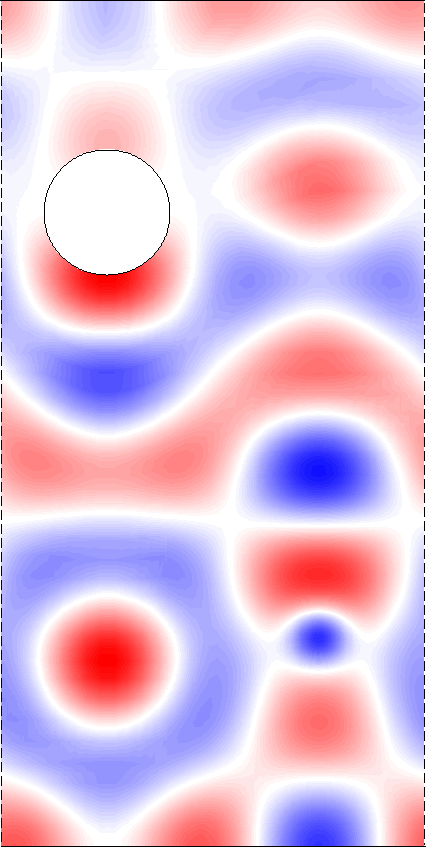} 
            \\\centering{Order $N=0$}}
        \hfill
        \parbox[t]{2.5cm}{\includegraphics[width=2.5cm]{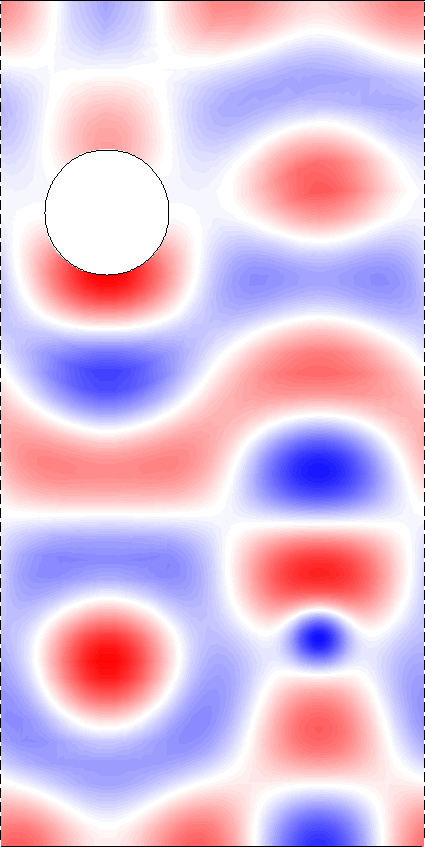} 
            \\\centering{Order $N=1$}}
        \hfill
        \parbox[t]{2.5cm}{\includegraphics[width=2.5cm]{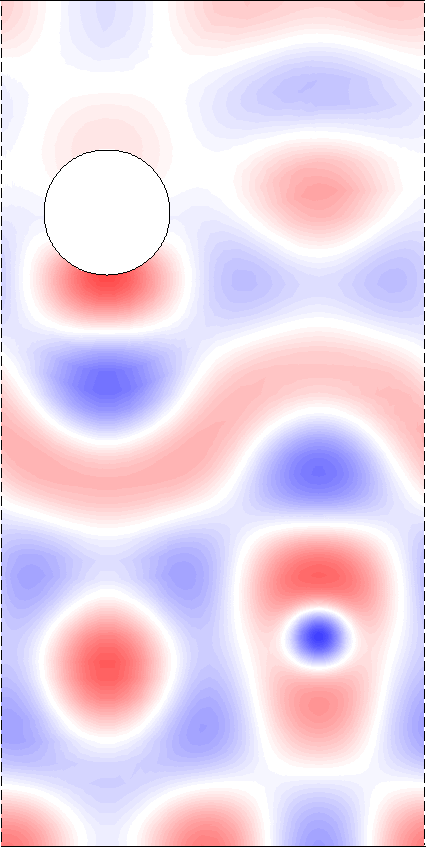} 
            \\\centering{Order $N=2$}} 
        \hfill
        \parbox[t]{2.5cm}{\includegraphics[width=2.5cm]{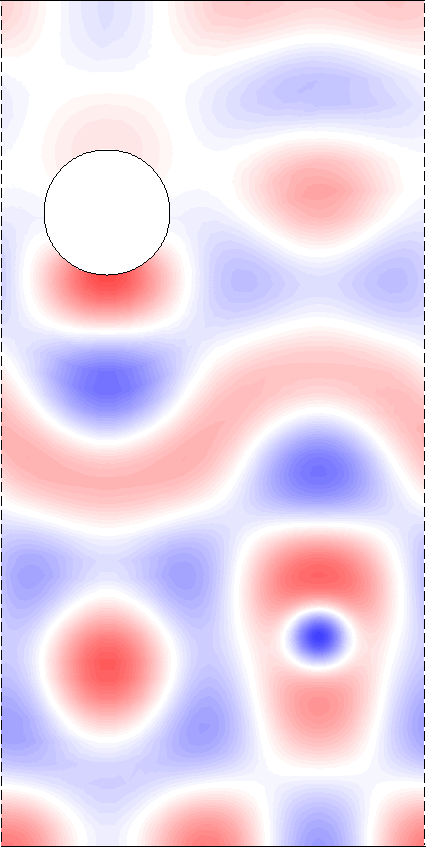} 
            \\\centering{Exact model}} 
        \parbox[b]{1cm}{
            \begin{tikzpicture}
            \begin{axis}[
            enlargelimits=false, 
            scale only axis,
            width=0.3cm,
            height=5cm,
            xticklabels={}
            ]
            \addplot graphics
            [xmin=0,xmax=1,ymin=-1.13,ymax=1.13]
            {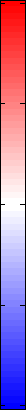};
            \end{axis}
            \end{tikzpicture} 
        } 
        \hfill
        \parbox[t]{2.5cm}{\includegraphics[height=5.0cm,width=2.5cm]{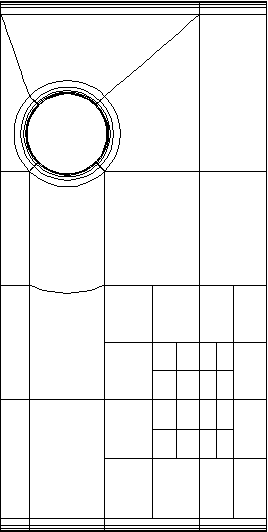} 
            \\\centering{Mesh}   } 
        \caption[Comparison]{Comparison of the real part of the pressure offer
            the approximate models of order $N=0, 1, 2$ to the exact pressure
            ($\eta=4\cdot10^{-6}$, $\omega=15$).  The mesh
            resolving the boundary layers used in the FEM of higher order is
            shown in the right subfigure.}
        \label{fig:pressure}
    \end{figure}
    
    \section{Numerical results}
    \label{sec:NumResults}
    
    For a torus domain with omitted disk, see Fig.~\ref{fig:torus}, we
    have performed numerical simulations for the exact
    model~\eqref{eq:navier.stokes} and the approximative pressure
    models~\eqref{eq:pappr:0}--\eqref{eq:pN1:vN}.  %
    We consider the problem in dimensionless quantities. %
    The domain is the rectangle $[0,1]\times[0,2]$, where the left and
    right sides are identified with each other, and the disk of diameter
    $0.30$ is centered at $(0.25,1.5)$. %
    As source $\vf$ we use the gradient of the Gaussian %
    $\exp(-|\vx - \vx_0|^2/0.005)$ with $\vx_0 = (0.75, 0.5)^\top$. %
    The source is $\scurl$-free, which has no influence to any of the numerical experiments.
    Furthermore, we choose for the speed of sound $c = 1$, %
    the (mean) air density as $\rho_0 = 1$ and neglect the second
    viscosity, $\eta' = 0$. %
    
    For the simulation we have used high-order finite elements within the
    numerical C++ library \emph{Concepts}~\cite{conceptsweb} to push the
    discretisation error below the modelling error. %
    We use $C^0$-continuous finite elements for the (approximative) pressure and both
    components of the (exact) velocity. Note, that the classical choice for the approximative velocity models 
    are $H(\Div,\Omega)$-conforming finite elements like Raviart-Thomas elements. 
    Here, we restrict the numerical experiments to the models of the approximative pressure which provides the greatest simplification.
    
    To resolve the boundary layers in the (exact) velocity, we refine the mesh
    geometrically towards the boundary, see the right picture in
    Fig.~\ref{fig:pressure}. %
    The high gradients of the source term are considered in a further
    (geometric) mesh refinement towards the point $\vx_0$. %
    The far field solution of the approximative models could be computed
    to a high precision on a rather coarse mesh as no boundary layer has
    to be resolved. %
    Anyhow, we have computed the far field solution on the mesh
    illustrated in Fig.~\ref{fig:pressure}, which allowed us firstly a
    straightforward evaluation of norms of the error functions and
    secondly a representation of the sum of far and near field on the same
    mesh. %
    We have chosen the polynomial degree to be 11 to obtain low enough
    discretisation errors such that the modelling errors become visible.
    
    \begin{figure}[tb]  
        {\parbox[t]{0.99\linewidth}{
                \begin{tikzpicture}[scale=0.96]
                \pgfplotsset{ 
                    legend cell align=left,
                    legend style={
                        font=\footnotesize, anchor=north east} %
                } %
                \begin{axis}[%
                xlabel=$x_2$,
                xmin=0,xmax=2,
                ylabel=$\Im\, v_1$,
                legend style={draw=none}, 
                legend style={fill=none},
                line width=1pt,
                extra x ticks={0.15},  
                extra x tick style={    
                    xticklabels={\tiny 0.15}, 
                    xmajorgrids=true            
                }
                ]
                \addplot+[smooth,mark=none,black,thin] table [x = y, y expr=\thisrow{exact}/1.77e-4] {Hole4em2Results.dat};
                \addplot+[smooth,mark=none,red,style=dashed,thin] table [x = y, y expr=\thisrow{FF}/1.77e-4] {Hole4em2Results.dat};
                \addlegendentry{exact solution};
                \addlegendentry{far field approximation};
                \end{axis}
                \end{tikzpicture}
                \hspace{-21em}(a)\hspace{21em}
                \begin{tikzpicture}[scale=0.96]
                \pgfplotsset{ 
                    legend cell align=left,
                    legend style={
                        font=\footnotesize, anchor=north east} %
                } %
                \begin{axis}[%
                xlabel=$x_2$,
                xmin=0,xmax=0.15,
                ylabel=$\Im\, v_1$,
                ymin=-0.13,ymax=0.13,
                xtick={0,0.05,0.1,0.15},
                xticklabels={$0$,$0.05$,$0.1$,$0.15$},
                yticklabels={,$-0.1$,$-0.05$,$0$,$0.05$,$0.1$},
                legend style={draw=none}, 
                line width=1pt
                ]
                \addplot+[smooth,mark=none,black,thin] table [x = y, y expr=\thisrow{exact}/1.77e-4] {Hole4em2Results.dat};
                \addlegendentry{exact solution};
                \addplot+[smooth,mark=none,red,style=dashed,thin] table [x = y, y expr=\thisrow{FF}/1.77e-4] {Hole4em2Results.dat};
                \addlegendentry{far field approximation};
                \addplot+[smooth,mark=none, red,thin,style=dash dotted] table [x = y, y expr=\thisrow{NF}/1.77e-4] {Hole4em2Results.dat};
                \addlegendentry{correcting near field};
                \addplot+[smooth,mark=none, red,thin] table [x = y, y expr=\thisrow{approximate}/1.77e-4] {Hole4em2Results.dat};
                \addlegendentry{far field + near field};
                \addplot+[smooth,mark=none,black,thin] table [x = y, y expr=\thisrow{exact}/1.77e-4] {Hole4em2Results.dat};      
                \addplot+[smooth,mark=none,red,style=dashed,thin] table [x = y, y expr=\thisrow{FF}/1.77e-4] {Hole4em2Results.dat};
                \end{axis}
                \end{tikzpicture}
                \hspace{-22em}(b)\hspace{12em}
        }}
        \caption[Normal velocity]{%
            Imaginary part of first velocity component in side view for
            $x_1=0$ %
            with \mbox{$\sqrt{\eta}=4\cdot10^{-2}$}, which is at $x_2 = 0$
            tangential to the bottom wall. The exact solution $v_1$ and the
            approximate {(far field) solution $(\apprA{v}{2})_1$ of order
                2, the corresponding near field $(\apprA{v^{BL}}{2})_1$ and the
                sum of both are shown, in (a) for the whole line $x_1 = 0$, and
                in (b) close to the wall.}}
        \label{fig:sideview}
    \end{figure}
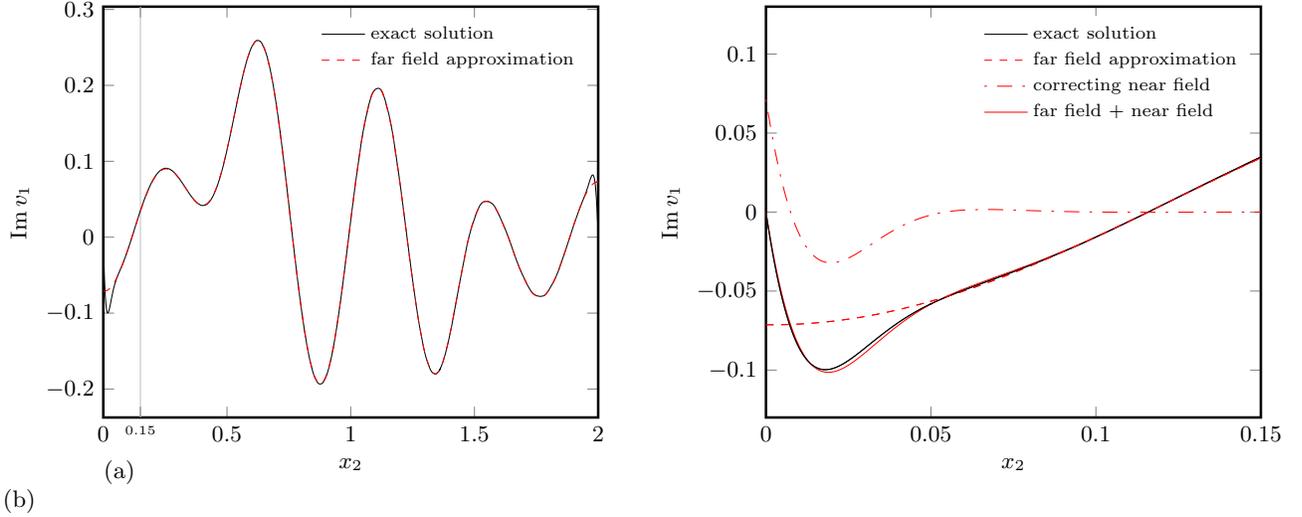
    
    For $\eta = 4\cdot10^{-6}$ and
    $\omega = 15$ we have illustrated the exact pressure and its
    approximation $\apprA{p}{0}$, $\apprA{p}{1}$ and $\apprA{p}{2}$ of
    order~0, 1 and 2, respectively, in the first four subfigures of
    Fig.~\ref{fig:pressure}. %
    The colour scaling in all the four subfigures matches to allow for a
    direct comparison. %
    In this example the approximations of order~0 and order~1 provide a
    coarse field description, where the pressure amplitude is
    overestimated. %
    The approximation of order~2, however, predicts the exact quite
    well. %
    For this example, however, with $\eta = 1.6\cdot10^{-3}$ we have
    illustrated the boundary layer in the tangential velocity component
    in Fig.~\ref{fig:sideview}, both for the exact model and the
    approximation of order~2. %
    The boundary layer thickness is $d_{\mathrm{BL}} =
    \sqrt{{2\eta}/{\omega\rho_0}} = 1.46\cdot10^{-2}$. %
    Here, the approximative far field velocity $\apprA{w}{2}$ and the
    respective near field were computed from the pressure approximation
    $\apprA{p}{2}$. %
    The representation of the velocity is in a side view for $x_1 = 0$,
    for which the first component is tangential to the lower boundary at
    $x_2 = 0$.  The approximate solution is the sum of the far field,
    which does not fulfill a homogeneous Dirichlet boundary condition,
    and a correcting near field. %
    The far field solution approximates the exact one away from the 
    boundary very well, see Fig.~\ref{fig:sideview}(a). 
    In its turn Fig.~\ref{fig:sideview}{b) shows the near field 
        correction and the behaviour of the solutions close to the wall.
        
        
        \begin{figure}[tb]
            {\parbox[t]{1.05\linewidth}{
                    \begin{tikzpicture}[scale=0.96]
                    \pgfplotsset{ legend style={
                            at={(0.99,0.13)},
                            font=\footnotesize, anchor=east} %
                    } %
                    \begin{loglogaxis}[%
                    xlabel=$\sqrt{\eta}$,
                    ylabel=modelling error,
                    xmin=1e-3,xmax=1e-1,
                    ymin=1e-5,ymax=1e1,
                    legend style={draw=none} 
                    ] 
                    \addplot+[only marks] table [x = eps, y = Order0] {RelModError.dat};
                    \addlegendentry{Order $N=0$};
                    \addplot+[only marks] table [x = eps, y = Order1] {RelModError.dat};
                    \addlegendentry{Order $N=1$};
                    \addplot+[only marks, mark=diamond*, violet, mark options={fill=violet},mark size=3pt] 
                    table [x = eps, y = Order2] {RelModError.dat};
                    \addlegendentry{Order $N=2$};
                    \addplot [blue]
                    table [
                    x=eps, 
                    y={create col/linear regression={
                            y=Order0, 
                            variance list={1,1,1,1,1,1,100,100,100,100,100}
                    }}
                    ] {RelModError.dat}
                    coordinate [pos=0.2] (A) 
                    coordinate [pos=0.4] (B)
                    ;
                    \draw (B) -| (A)  
                    node [pos=0.75,anchor=east] {1.0} 
                    ;
                    \addplot [red]
                    table [
                    x=eps, 
                    y={create col/linear regression={
                            y=Order1,
                            variance list={1,1,1,1,1,1,1,100,100,100,100}
                    }}
                    ] {RelModError.dat}
                    coordinate [pos=0.3] (A) 
                    coordinate [pos=0.5] (B)
                    ;
                    \xdef\slope{\pgfplotstableregressiona} 
                    \draw (B) -| (A)  
                    node [pos=0.75,anchor=east] {2.0} 
                    ;
                    \addplot [violet]
                    table [
                    x=eps, 
                    y={create col/linear regression={
                            y=Order2,
                            variance list={100,100,100,100,100,1,1,1,1,100,100}
                    }}
                    ] {RelModError.dat}
                    coordinate [pos=0.5] (A) 
                    coordinate [pos=0.7] (B)
                    ;
                    \xdef\slope{\pgfplotstableregressiona} 
                    \draw (B) -| (A)  
                    node [pos=0.75,anchor=east] {\num[round-mode=figures]{\slope}} 
                    ;
                    \end{loglogaxis}
                    \end{tikzpicture}
                    \hspace{-22em}(a)\hspace{21em}
                    \begin{tikzpicture}[scale=0.96]
                    \pgfplotsset{ legend style={
                            at={(0.99,0.13)},
                            font=\footnotesize, anchor=east} %
                    } %
                    \begin{loglogaxis}[%
                    xlabel=$\sqrt{\eta}$,
                    ylabel=modelling error,
                    xmin=1e-3,xmax=1e-1,
                    ymin=1e-5,ymax=1e1,
                    legend style={draw=none} 
                    ] 
                    \addplot+[only marks,red, mark options={fill=red}] table [x = eps, y = Order1] {RelModErrorEV.dat};
                    \addlegendentry{Order $N=1$};
                    \addplot+[only marks, mark=diamond*, violet, mark options={fill=violet},mark size=3pt] 
                    table [x = eps, y = Order2] {RelModErrorEV.dat};
                    \addlegendentry{Order $N=2$};
                    \addplot [red]
                    table [
                    x=eps, 
                    y={create col/linear regression={
                            y=Order1,
                            variance list={1,1,1,1,1,1,1,100,100,100,100}
                    }}
                    ] {RelModErrorEV.dat}
                    coordinate [pos=0.3] (A) 
                    coordinate [pos=0.5] (B)
                    ;
                    \xdef\slope{\pgfplotstableregressiona} 
                    \draw (B) -| (A)  
                    node [pos=0.75,anchor=east] {1.0} 
                    ;
                    \addplot [violet]
                    table [
                    x=eps, 
                    y={create col/linear regression={
                            y=Order2,
                            variance list={100,1,1,1,1,1,1,100,100,100,100}
                    }}
                    ] {RelModErrorEV.dat}
                    coordinate [pos=0.5] (A) 
                    coordinate [pos=0.7] (B)
                    ;
                    \xdef\slope{\pgfplotstableregressiona} 
                    \draw (B) -| (A)  
                    node [pos=0.75,anchor=east] {\num[round-mode=figures]{\slope}} 
                    ;
                    \end{loglogaxis}
                    \end{tikzpicture}
                    \hspace{-22em}(b)\hspace{22em}
            }}
            \captionsetup{width=\textwidth}
            \caption[Error]{The relative modelling error 
                $\|p-q_{\appr,N}\|_{H^1(\Omega)}/\|p\|_{H^1(\Omega)}+
                \|\vv-\apprA{\vv}{N}\|_{H(\Div,\Omega)}/\|\vv\|_{H(\Div,\Omega)}$
                \mbox{for $N=0,1,2$} w.r.t. square root of viscosity for 
                (a) a dimensionless frequency value $\omega=15$ and
                (b) an eigenfrequency $\omega=\sqrt{20}\,\pi$.}
            \label{fig:error}
        \end{figure}
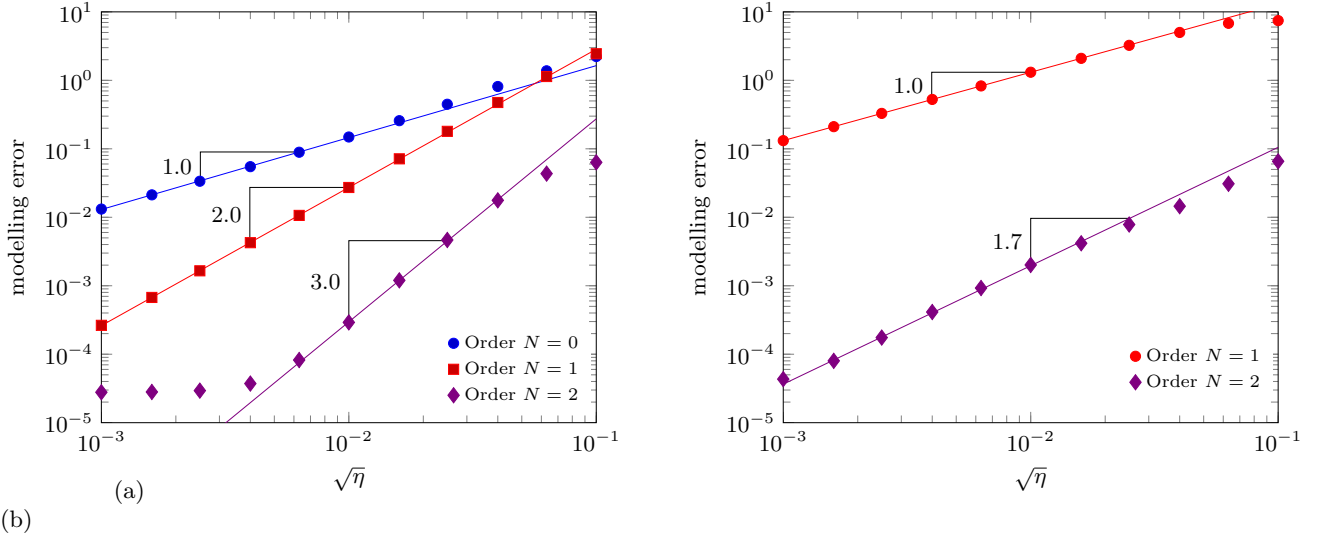
        
        To analyse the modelling error in dependence of the viscosity, and
        hence $\eps$, we have performed numerical simulations on the simple
        rectangular torus domain $\Omega = [0,1]\times[0,1]$ (\ie, without
        the hole of the previous problem),
        for which the left and right
        sides are again identified with each other. %
        The other parameters are identical to those of the previous problem.
        The studied frequency $\omega = 15$ is not a Neumann eigenfrequency
        of $-\Delta$, the closest eigenfrequencies are $\sqrt{20}\,\pi\approx 14.05$ and $5\,\pi\approx 15.71$. %
        We compute the error functions on the subdomain ${\Omega_{\delta}} =
        [0,1]\times[0.2,0.8]$,
        which has a distance of {$\delta = 0.2$} to the boundary of
        $\Omega$. %
        This distance is large enough such that in {$\Omega_{\delta}$} for the
        studied viscosities the contribution of the exponentially decaying
        near fields can be neglected.  %
        In Fig.~\ref{fig:error}(a) we have shown the {relative} modelling error %
        $$\|p -
        \apprA{p}{N}\|_{H^1(\Omega_{\delta})}/\|p\|_{H^1(\Omega_{\delta})} + \|\vv -
        \apprA{\vv}{N}\|_{H(\Div,\Omega_{\delta})}/\|\vv\|_{H(\Div,\Omega_{\delta})}$$
        for the approximative
        solutions of order~0, 1 and 2 in dependence of the (square root of
        the) viscosity. %
        We observe linear convergence in $\sqrt{\eta}$ for the approximative
        solution of order~0, quadratic convergence for that of order~1 and
        convergence of order~3 for the approximative solution of order~2. %
        These results verify that the estimates in Theorem~\ref{lem:error} are
        sharp. %
        The error is computed on the above mesh with polynomial degree 14
        and included indeed a small discretisation error which becomes visible
        for small viscosities ($\sqrt{\eta} < 5\cdot10^{-3}$) and the
        approximative model of order~2.
        
        The theoretical estimates are for non-resonant frequencies and the
        constants may blow up if the frequency tends to a resonant one, \ie, a
        Neumann eigenfrequency of $-\Delta$. %
        The eigenfrequencies for the studied example are %
        ${\omega_{k,m}} = \pi\sqrt{k^2+4m^2}$, for {$k\in\IN,m \in
            \IN_0$}. %
        In addition we analyse the modelling error in dependence of the
        viscosity for an eigenfrequency value $\omega_0{=\omega_{2,2} =
            \omega_{4,1}} = \sqrt{20}\,\pi$, see Fig.~\ref{fig:error}(b). %
        The convergence in this case looses in order, \ie, linear convergence
        in $\sqrt{\eta}$ for the approximative solution of order~1,
        convergence of order $1.7$ for order~2 and the approximative solution
        of order~0 explodes and is not represented in the picture.
        
        Furthermore, we analyse the modelling errors of the three
        approximative solutions in dependence of the frequency for the
        rectangular domain and $\eta = 1.6\cdot10^{-3}$, see
        Fig.~\ref{fig:omega}. %
        The approximate solution of order~0 and so the modelling error blows
        up close to the eigenfrequencies.
        However, the approximate solution of order~1 blows up {only} close
        to the eigenfrequency values ${\omega_{k,0}} = k\pi$ for $k\in\IN$.
        That could be explained by the fact {that for $m = 0$ in this
            example} the velocity and so its divergence is constant in $x_1$}
    and the additional term in the boundary condition of order~1
    disappears. %
    {In this case,} the order~1 approximation at that frequencies
    becomes {identical to} 
    that of order~0. %
    Conversely, the {error of the} approximate solution of order~2, {due to the additional term in the domain},
    always stays {lower than $3\cdot10^{-2}$} and, as it was shown
    earlier, converges w.r.t. viscosity even at the resonance. %
    Yet, in this work we will leave that sentence without a proof and the
    numerical results are presented for illustration reason only.
    
    
    Note, that the above simulation corresponds for dimensionful
    quantities for example to %
    a rectangular domain of size $4\,\mathrm{cm}\times 8\,\mathrm{cm}$,
    where the hole has a diameter of $1.2\,\mathrm{cm}$, a frequency
    $\omega = 5.146\,\mathrm{kHz}$, %
    a speed of sound in air $c = 343\,\mathrm{m}/\mathrm{s}$, %
    a mean density of air $\rho_0 = 1.2\,\mathrm{kg}/\mathrm{m}^3$. %
    Then, a dynamic viscosity of air $\eta = 17.1\,\mathrm{mPa\,s}$
    corresponds to a dimensionless viscosity of $1.04\cdot10^{-6}$
    (dimensionless value of $\sqrt{\eta}$ would be $1.02\cdot10^{-3}$),
    which is close to the lowest viscosity value studied in the above
    experiments.

    \begin{figure}[tb]
        {\parbox[t]{1.05\linewidth}{
                \begin{tikzpicture}[scale=1.1]
                \pgfplotsset{ 
                    legend cell align=left,
                    legend style={
                        font=\footnotesize, anchor=north east} %
                } %
                \begin{semilogyaxis}[%
                xlabel=$\omega$,
                ylabel=modelling error,
                width=0.6\textwidth,
                height=0.4\textwidth,
                xmin=2,xmax=17,
                ymin=1e-5,ymax=1e2,
                legend style={draw=none}, 
                legend pos= south east,
                line width=0.75pt,
                extra x ticks={3.14,6.28,7.02,8.88,9.42477796076937,11.3271733991390,
                    12.5663706143824,12.9531183434152,14.0496294621022,15.7079632696904},  
                extra x tick style={ 
                    xticklabels={},
                    xmajorgrids=true  }
                ]
                \addplot+[mark=none,blue] table [x = omega, y = Order0] {ModErrorOmega.dat};
                \addlegendentry{Order $N=0$};
                \addplot+[mark=none,red] table [x = omega, y = Order1] {ModErrorOmega.dat};
                \addlegendentry{Order $N=1$};
                \addplot+[mark=none, violet] table [x = omega, y = Order2] {ModErrorOmega.dat};
                \addlegendentry{Order $N=2$};
                \end{semilogyaxis}
                \end{tikzpicture}
                \hspace{-30em}(a)\hspace{29em}
                \begin{tikzpicture}[scale=1.1]
                \pgfplotsset{ 
                    legend cell align=left,
                    legend style={
                        font=\footnotesize, anchor=north east} %
                } %
                \begin{semilogyaxis}[%
                xlabel=$\omega$,
                ylabel=modelling error,
                width=0.3\textwidth,
                height=0.4\textwidth,
                line width=0.75pt,
                xmin=13.8496,xmax=14.2496,
                xtick={13.9,14,14.1,14.2},
                ymin=1e-5,ymax=1e2,
                extra x ticks={14.0496},  
                extra x tick style={    
                    xticklabels={}, 
                    xmajorgrids=true            
                }
                ]
                \addplot+[mark=none,blue] table [x = omega, y = Order0] {ModErrorOmega.dat};
                \addplot+[mark=none,red] table [x = omega, y = Order1] {ModErrorOmega.dat};
                \addplot+[mark=none, violet] table [x = omega, y = Order2] {ModErrorOmega.dat};
                \end{semilogyaxis}
                \end{tikzpicture}
                \hspace{-13em}(b)\hspace{13em}
        }}
        \captionsetup{width=\textwidth}
        \caption[Omega dependance]{%
            The modelling error
            $\|p-q_{\appr,N}\|_{H^1(\Omega)}/\|p\|_{H^1(\Omega)}+
            \|\vv-\apprA{\vv}{N}\|_{H(\Div,\Omega)}/\|\vv\|_{H(\Div,\Omega)}$
            \mbox{for $N=0,1,2$} w.r.t. dimensionless frequency $\omega$ for
            $\eta=1.6\cdot10^{-3}$.  }
        \label{fig:omega} 
    \end{figure}

    \section{Conclusion}
    
    In this article the acoustic wave propagation in viscous gases inside
    a bounded two-dimensional domain has been studied as a solution of the
    compressible linearised Navier-Stokes equation.  In frequency domain
    the governing equations are decoupled in equations for the velocity
    and pressure, where the pressure equation lacks boundary conditions.
    The velocity exhibits a boundary layer on rigid walls, whose extend
    scales with the square root of the viscosity and the finite element
    discretisation requires a heavy mesh refinement in the neighbourhood
    of the wall. %
    Using the technique of multiscale expansion for small viscosities
    impedance boundary conditions for velocity and pressure are derived up
    to second order. %
    The derivation and presented analysis is based on a previous work by
    the authors~\cite{Schmidt.Thoens.Joly:2014}, where the complete
    asymptotic expansion of velocity and pressure has been derived. %
    It has be shown that the velocity is represented as a sum of a far
    field expansion, which does not exhibits a boundary layer, and a
    correcting near field expansion close to the wall. %
    For the pressure, which does not exhibit a boundary layer, there is
    only a far field expansion and a near field expansion is absent.
    
    Using boundary conditions for the pressure presented in this work and
    respective partial differential equations pressure approximations are
    defined independently of respective velocities. %
    The zero-th order condition is the well-known Neumann boundary
    condition for rigid walls, and the conditions of first or second order
    take into account absorption inside the boundary layer. %
    The velocity boundary condition is for a far field approximation,
    whose finite element discretisation does not need a special mesh
    refinement close to walls. %
    Here a boundary layer contribution depending on the far field velocity
    can be added to obtain an overall highly accurate description of the
    velocity. %
    The derivation of the boundary conditions for either pressure or
    velocity include curvature effects, where the curvature becomes
    present in the boundary conditions of order 2. %
    
    The approximative models including impedance boundary conditions are
    justified by a stability and error analysis. %
    The results of the numerical experiments have been provided to
    illustrate the stability and error estimates. %
    Although, throughout the article the frequency is assumed to be not an
    eigenfrequency of the limit problem for vanishing viscosity, we show
    by numerical computations that the second order model provides accurate
    approximations for all frequencies and the first order model except
    some of the above mentioned eigenfrequencies.  %
    This results give a foundation for future studies for the case of
    resonances of the limit problem in bounded domains.

\ifx\undefined\allcaps\def\allcaps#1{#1}\fi\def\cprime{$'$}

\end{document}